\numberwithin{equation}{section}
\newtheorem{theorem}{Theorem}[section]
\newtheorem{lemma}[theorem]{Lemma}
\newtheorem{lem}[theorem]{Lemma}
\newtheorem{proposition}[theorem]{Proposition}
\def\eps{\varepsilon }
\newcommand{\bea}{\begin{eqnarray}}
\newcommand{\eea}{\end{eqnarray}}
\newcommand{\RR}{\mathbb{R}}
\newcommand{\ZZ}{{\mathbb Z}}
\newcommand{\TT}{{\mathbb T}}
\def\beq{\begin{equation}}
\def\eeq{\end{equation}}
\def\bb1{{1\!\!1}}
\def\cL{\mathcal{L}}
\newcommand{\pa}{{\partial}}
\def\eps{\varepsilon}
\def\Ff{\widehat{f}}
\def\FK{\widehat{K}}
\def\Fg{\widehat{g}}
\def\Frho{\widehat{\rho}}
\def\FE{\widehat{E}}
\def\FS{\widehat{S}}
\def\Fmu{\widehat{\mu}}
\def\TK{\widetilde{K}}
\begin{document}

\title{Landau damping for analytic and Gevrey data} 

\author{Emmanuel Grenier\footnotemark[1]
  \and Toan T. Nguyen\footnotemark[2]
\and Igor Rodnianski\footnotemark[3]
}

\maketitle

\renewcommand{\thefootnote}{\fnsymbol{footnote}}

\footnotetext[1]{CNRS et \'Ecole Normale Sup\'erieure de Lyon, Equipe Projet Inria NUMED,
 INRIA Rh\^one Alpes, Unit\'e de Math\'ematiques Pures et Appliqu\'ees., 
 UMR 5669, 46, all\'ee d'Italie, 69364 Lyon Cedex 07, France. Email: Emmanuel.Grenier@ens-lyon.fr}

\footnotetext[2]{Penn State University, Department of Mathematics, State College, PA 16803. Email: nguyen@math.psu.edu. TN is a Visiting Fellow at Department of Mathematics, Princeton University, and partly supported by the NSF under grant DMS-1764119, an AMS Centennial fellowship, and a Simons fellowship. }

\footnotetext[3]{Princeton University, Department of Mathematics, Fine Hall, Washington Road, Princeton, NJ 08544. Email: irod@math.princeton.edu. IR is partially supported by the NSF 
grant DMS \#1709270 and a Simons Investigator Award. }

\subsubsection*{Abstract}

In this paper, we give an elementary proof of the nonlinear Landau damping for the Vlasov-Poisson system near Penrose stable equilibria on the torus $\TT^d \times \RR^d$ that was first obtained by Mouhot and Villani in 
\cite{MV} for analytic data and subsequently extended by Bedrossian, Masmoudi, and Mouhot \cite{BMM} for Gevrey-$\gamma$ data, $\gamma\in(\frac13,1]$. 
Our proof relies on simple pointwise resolvent estimates and a standard nonlinear bootstrap analysis, using an
ad-hoc family of analytic and Gevrey-$\gamma$ norms.



\section{Introduction}


We study the classical Vlasov-Poisson system 
\begin{equation}
\label{VP}
\partial_t f + v \cdot \nabla_x f + E \cdot \nabla_v f = 0, \qquad \nabla_x \cdot E = \rho - 1
\end{equation} 
for $x \in \TT^d$ and $v\in \RR^d$, where $\rho(t,x)$ denotes the charge density 
\begin{equation}
\label{def-rho}
\rho(t,x) = \int_{\RR^d} f(t,x,v)\; dv. 
\end{equation} 
The problem \eqref{VP}-\eqref{def-rho} will be considered for analytic or Gevrey initial data $f^0(x,v)$, satisfying
\beq \label{totalmass}
\iint_{\TT^d \times \RR^d} f^0(x,v)\; dxdv = 1.
\eeq
Note that the total mass is conserved in time: $\iint f(t,x,v)\;dxdv =1$ for all times, and thus the Poisson equation
\footnotetext{The results also apply to the gravitational case $E=\nabla\phi$ with the appropriate modification of the Penrose condition, where $\mu$ is replaced by $-\mu$.}

$$
-\Delta\phi=\rho-1,\qquad E= -\nabla\phi
$$
is solvable on $\TT^d$. 

In this paper, we study the large time behavior of solutions near a homogenous equilibrium 
$$
 f = \mu(v) , \qquad E = 0,
$$
with $\int \mu(v)\; dv = 1$. This leads to the following perturbed nonlinear Vlasov-Poisson system  
\begin{equation}
\label{VP-pert}
\partial_t f + v \cdot \nabla_x f + E \cdot \nabla_v \mu  = - E \cdot\nabla_v f, \qquad \nabla \cdot E = \rho, 
\end{equation} 
with initial perturbations  $f^0(x,v)$ of zero mass. 

Throughout this paper, we consider equilibria $\mu(v)$ so that 

\begin{itemize}

\item $\mu(v)$ is real analytic and satisfies 
\begin{equation}
\label{reg-mu}
|\widehat{\langle v\rangle^2\mu}(\eta)| + |\widehat{\mu}(\eta)|  \le C_0 e^{-\theta_0 |\eta|},
\end{equation} 
where $\widehat{\mu}$ denotes the Fourier transform of $\mu(v)$. 

\item $\mu(v)$ satisfies the Penrose stability condition, namely, 
\begin{equation}\label{Penrose0}
\inf_{k\in \ZZ^d  \setminus\{0\}; \Re \lambda \ge 0} \Big |1 + \int_0^\infty e^{-\lambda t} t \widehat{\mu}(kt) \; dt \Big| \ge \kappa_0 >0 .
\end{equation}
		
\end{itemize}

The Penrose stability condition holds for a variety of equilibria including the Gaussian $\mu(v) = e^{-|v|^2/2}$. 
In three or higher dimensions, the condition is valid for any positive and radially symmetric equilibria \cite{MV}. 

For the above class of so-called Penrose stable equilibria, we study {\em Landau damping} -- decay of the electric field for large times. 
The linear Landau damping was discovered and fully understood by Landau \cite{Landau}. 
The nonlinear damping has been proved in the case of analytic data by Mouhot and Villani 
in their celebrated work \cite{MV}. Their proof has then been simplified, and the result has been extended for Gevrey data, in \cite{BMM}. 

In this paper, we will give an elementary and short proof of the aforementioned results: namely, if initial perturbations $f^0$ are sufficiently small in a suitable analytic or Gevrey regularity space, then the nonlinear Landau damping occurs. 
More precisely, (with $k\in \Bbb T^d$ and $\eta\in \Bbb R^d$ denoting the Fourier variables conjugate to 
$x$ and $v$, respectively,  and $\langle k,\eta\rangle=\sqrt{1+|k|^2+|\eta|^2}$, $\hat f_{k,\eta}=\hat f(k,\eta)$)

\begin{theorem}\label{theo-LD}
Let $\lambda_1 > 0$ and $\gamma\in (\frac13,1]$. There exists $\eps$ such that for any initial data $f^0$ satisfying
$$ 
\sum_{j=0}^{d} \sum_{k\in \ZZ^d}  \int_{\RR^d}  e^{2\lambda_1 \langle k,\eta\rangle^\gamma} 
|\partial^j_\eta \widehat f^0_{k,\eta}|^2 d \eta  \le \eps,
$$
the nonlinear Landau damping occurs: that is, $E(t,x)$ goes to $0$ exponentially fast in $\langle t\rangle^\gamma$, and $f(t,x+vt,v)$ converges to a limit $f_\infty(x,v)$.
\end{theorem}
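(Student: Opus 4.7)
The plan is to work in the free-transport profile variable
\[
g(t,x,v) := f(t, x+vt, v),
\]
which transforms the perturbed system \eqref{VP-pert} into
\[
\partial_t g + E(t, x+vt) \cdot \nabla_v \mu + E(t, x+vt) \cdot (\nabla_v - t\nabla_x) g = 0.
\]
Taking a spatial--velocity Fourier transform and using that $\widehat\rho_k(t) = \widehat g_{k,kt}(t)$, the $k$-th mode of the density satisfies a Volterra-type integral equation
\[
\widehat\rho_k(t) + \int_0^t K_k(t-s)\, \widehat\rho_k(s)\, ds = \mathcal{S}_k(t) + \mathcal{N}_k(t),
\]
with kernel $K_k(\tau) = -\tau\, \widehat\mu(k\tau)$, linear source $\mathcal{S}_k(t) = \widehat g_{k,kt}(0)$, and $\mathcal{N}_k(t)$ encoding the nonlinear coupling $E\cdot(\nabla_v - t\nabla_x) g$. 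The first step is to close a pointwise estimate for this Volterra equation.

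The Penrose condition \eqref{Penrose0}, together with the analyticity bound \eqref{reg-mu}, yields a uniform lower bound $|1 + \widetilde K_k(\lambda)| \ge \kappa_0/2$ on a closed half-plane $\Re \lambda \ge -\lambda_*$ for some $\lambda_* > 0$, where $\widetilde K_k$ is the Laplace transform of $K_k$. Inverting $(1+K_k*)$ via Laplace inversion then gives a pointwise linear damping bound: whenever the right-hand side decays like $e^{-\lambda\langle k,kt\rangle^\gamma}$, so does $\widehat\rho_k(t)$, with a constant loss depending on $\kappa_0$ and $\lambda_*$. This quantitative linear damping statement is the workhorse of the argument.

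The nonlinear analysis is a bootstrap in a time-dependent Gevrey norm
\[
\|g(t)\|_{X_t}^2 := \sum_{j=0}^d \sum_{k\in\ZZ^d} \int_{\RR^d} e^{2\lambda(t)\langle k,\eta\rangle^\gamma}\, |\partial_\eta^j \widehat g_{k,\eta}(t)|^2 \, d\eta,
\]
where $\lambda(t)$ decreases strictly from $\lambda_1$ to a positive limit $\lambda_\infty$ and satisfies $-\dot\lambda(t) \gtrsim \langle t\rangle^{-1-\delta}$ for a small $\delta>0$. The linear transport contributes nothing to $\frac{d}{dt}\|g\|_{X_t}^2$, and the term $E\cdot\nabla_v\mu$ is handled via the resolvent estimate above applied to $\widehat\rho_k$. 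Differentiation produces a favorable dissipative contribution $-|\dot\lambda(t)|\,\||\nabla|^{\gamma/2} g\|_{X_t}^2$ that must absorb the reaction term $E\cdot(\nabla_v - t\nabla_x) g$. In Fourier, a mode $(k,\eta)$ of the reaction term receives contributions from $\widehat\rho_\ell(s)$ times $\widehat g_{k-\ell,\eta-(k-\ell)s}(s)$, and the bilinear estimate reduces to showing that the weight gap $e^{(\lambda(s)-\lambda(t))\langle k,\eta\rangle^\gamma}$ is enough to defeat the concentration of interactions in the echo window $t\approx |\eta|/|\ell|$.

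The main obstacle is precisely this echo estimate: the plasma-echo cascade produces an $\langle \eta\rangle^{1/3}$-scale loss of regularity per interaction, which is exactly what forces the threshold $\gamma > 1/3$. The strategy is to split the bilinear sum according to whether the frequency $\eta$ of $g$ lies near the resonant strip $\ell t$ or outside it, use the linear resolvent bound in the resonant piece to trade one factor of $\langle t\rangle^{-1}$ for a factor $e^{-\lambda_\infty\langle \ell,\ell s\rangle^\gamma}$, and absorb the remainder by the dissipation coming from $-\dot\lambda(t)$ after a Cauchy--Schwarz in $\ell$. Once this bilinear inequality is established, the bootstrap closes in the standard way, giving $\|g(t)\|_{X_t} \le 2\eps$ uniformly in $t$; Plancherel then yields $|\widehat E_k(t)| \lesssim \eps\, e^{-\lambda_\infty\langle k,kt\rangle^\gamma}$, which is the Landau damping decay. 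The convergence $g(t) \to f_\infty$ in the profile frame follows from $\int_0^\infty \|E(t)\|_{L^\infty}\, dt < \infty$, producing a Cauchy sequence in $X_{t}$-type norms with any rate $\lambda < \lambda_\infty$.
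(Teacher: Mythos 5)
Your overall architecture matches the paper's (profile variable $g(t,x,v)=f(t,x+vt,v)$, Volterra equation for $\widehat\rho_k$ with the nonlinearity placed in the source, pointwise resolvent bounds from the Penrose condition via Laplace inversion, a slowly shrinking Gevrey radius, and a bootstrap), but there is a genuine gap exactly where the theorem lives: the echo/reaction estimate is asserted, not proved. Your sentence that the bilinear estimate ``reduces to showing that the weight gap $e^{(\lambda(s)-\lambda(t))\langle k,\eta\rangle^\gamma}$ is enough to defeat the concentration of interactions in the echo window'' restates the problem, and the strategy you then outline (resonant/non-resonant split, ``trade one factor of $\langle t\rangle^{-1}$'' via the linear resolvent bound, absorb the rest by the $-\dot\lambda$ dissipation after Cauchy--Schwarz in $\ell$) is never quantified; in particular the threshold $\gamma>1/3$ never emerges from any inequality you write. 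Note also that a pointwise resolvent bound cannot be ``used in the resonant piece'' of a bilinear term: it controls the solution of the Volterra equation in terms of its full source, which is how the paper uses it (the nonlinearity is put into $\widehat S_k$ and one must then estimate the weighted kernel $C_{k,\ell}(t,s)$, Lemma \ref{lem-bdS}). The decisive computation, absent from your sketch, is the resonant regime $k<\ell<2k$, $|kt-\ell s|\le t/2$: there the radius decrease gives $\lambda(s)-\lambda(t)\gtrsim |\ell-k|/(|k|\langle t\rangle^{\delta})$, hence a factor $e^{-\theta_\delta\langle t\rangle^{\gamma-\delta}|\ell-k|/|k|^{1-\gamma}}$, and this must beat the echo amplification $t-s\approx |\ell-k|\,t/\ell$, which happens precisely when $3\gamma>1+2\delta$. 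Some equivalent of this computation is indispensable; without it you have not proved the theorem for any $\gamma$, let alone down to $1/3$.

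Two structural choices in your setup would also keep the energy estimate from closing as written. First, your norm carries only the weight $e^{\lambda(t)\langle k,\eta\rangle^\gamma}$, so the dissipation generated by $\dot\lambda$ has strength $\langle t\rangle^{-1-\delta}\langle k,\eta\rangle^{\gamma}$, while $E\cdot(\nabla_v-t\nabla_x)g$ loses a full derivative and a full power of $t$; for the part of this term in which the multiplier is not commuted there is no weight gap at all, and it must be shown to cancel by antisymmetry of the transport field (the paper's identity \eqref{div-free}), a step you never invoke --- only the commutator $A_{k,\eta}-A_{k-\ell,\eta-\ell t}$ enjoys the gain you describe. Second, the paper's norms carry the extra Sobolev weight $\langle k,\eta\rangle^{\sigma}$, which is what yields the polynomial decay $F[\rho](t,\lambda(t))\lesssim\sqrt{\epsilon}\,\langle t\rangle^{-\sigma+1}$; that decay is what makes the outgoing-characteristics condition \eqref{cond-lambda} compatible with an arbitrarily small total radius loss and lets the reaction sum converge. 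With a pure Gevrey norm your bootstrap only returns $|\widehat\rho_\ell(t)|\lesssim\epsilon\,e^{-\lambda_\infty\langle\ell,\ell t\rangle^\gamma}$, and the absorption bookkeeping would have to be redone in the heavier style of Bedrossian--Masmoudi--Mouhot rather than closing ``in the standard way.'' (Minor points: the interacting mode in the integrated equation is $\widehat g_{k-\ell,kt-\ell s}(s)$, not $\widehat g_{k-\ell,\eta-(k-\ell)s}(s)$; and for $f_\infty$ you need integrability of $(1+t)\|E(t)\|$ against one derivative of $g$ in the Gevrey norm, not merely $\int_0^\infty\|E(t)\|_{L^\infty}\,dt<\infty$, though this is harmless given the exponential decay.)
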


As mentioned, the result has already been proved by Mouhot and Villani in \cite{MV} for analytic data ($\gamma=1$) and by Bedrossian, Masmoudi, and Mouhot \cite{BMM} ($\gamma>1/3$). Here we propose a simpler proof of this classical result. The simplification comes from the observations that

\begin{itemize}

\item the result is a linear perturbation of the same result for the free transport, the latter result being obvious.
As a matter of fact, a solution $f(t,x,v)$ of the free transport is simply given by
\beq \label{free}
f(t,x,v) = f^0(x - vt, v) .
\eeq
If $f^0$ is smooth, then the corresponding electric field $E$ goes to $0$. If the regularity of $f^0$ is in a Sobolev class, then
$E$ goes to $0$ at a polynomial rate. If it is analytic or Gevrey, then $E$ goes to $0$ exponentially fast.

\item echos are suppressed. Echo in plasma refers to the interaction of two waves
$f_j e^{i k_j (x-vt) + i \eta_j v}$ for $j = 1, 2$. These two waves interact, through a nonlinear term of the form 
$E_1\pa_v f_2$, and give birth to a third one. The latter is the wave 
$$
i  \delta_{\eta_1 = k_1t} f_1 f_2 (\eta_2-k_2 t) 
\left\{e^{ik_1 x + k_2 (x-vt) + i \eta_2 v}- e^{i(k_1 + k_2) (x-vt) + i \eta_2 v}\right\}.
$$
The electric field of this third wave may become important later,  at very large times. However, echoes can
appear at large times only if the initial data has components of high spatial frequencies, which is not the
case with analytic and Gevrey initial data (see \cite{Bedrossian,GNR2} for more details). This is manifest already 
in the presence of the term $\delta_{\eta_1 = k_1t}$, which does not vanish iff $\eta_1=k_1t$, as well as the 
additional factors of either $\delta_{\eta_2 =k_2t}$ or $\delta_{\eta_2 = (k_1+k_2)t}$, which will 
appear in the expression for the electric field.

\item free transport (\ref{free}) creates large gradients in $v$.
More precisely derivatives in $v$ growth polynomially in time. An easy way to fight this growth
is to have an exponentially decreasing electric field, which is the case for analytic initial data.

\item choice of the analytic norms. We build a norm which 
controls the derivatives of $f$ at any order. 
We consider all regularities at
the same time, using so called ``generator functions'', and vary 
 the analyticity radius, which gives us extra flexibility in the control on the electric field.

\end{itemize}
We note that, while nonlinear Landau damping successfully describes long time behavior of solutions the Vlasov-Poisson 
equations near stable Penrose equilibria for {\it analytic/Gevrey} data, the corresponding problem for perturbations 
(and equilibria) of finite and even ${\mathcal{C}}^\infty$ regularity is open, see however \cite{Bedrossian,GNR2}.

The analytic and Gevrey norms are introduced in Section $2$. Section $3$ is devoted to a review of the classical proof
of linear Landau damping. The nonlinear Landau damping is proved in Section $4$. 
For the sake of simplicity, we detail the proof in one space dimension $d = 1$. The arguments in
higher dimensions follow with obvious modifications.


\section{Analytic and Gevrey spaces}


We construct solutions in analytic and Gevrey spaces. First, following the characteristics of the free transport, we introduce
$$
g(t,x,v) = f(t,x + v t, v).
$$
Note that
$$
\partial_v g = t \partial_x f + \partial_v f, \quad \partial_x g = \partial_x f.
$$
This transforms \eqref{VP-pert} into 
\begin{equation}\label{VP-g} 
\partial_t g + E(t,x+vt)\partial_v \mu(v)= - E(t,x+vt) (\partial_v - t \partial_x) g 
\end{equation}
where $E(t,x)$ solves the Poisson equation \eqref{VP}. Note that the density $\rho$ satisfies 
\begin{equation}\label{new-rho} 
\rho(t,x) = \int_{\RR} g(t,x-vt, v)\; dv.
\end{equation}
We measure the analyticity or Gevrey regularity of solutions using the following "generator functions". Namely
for $z\ge 0$, $\gamma>0$, and $\sigma>0$, we introduce
$$ 
G[g](z) := \sum_{k\in \ZZ}  \int_\RR e^{2z\langle k,\eta\rangle^\gamma} 
\Bigl[ |\widehat g_{k,\eta}|^2 + |\partial_\eta \widehat g_{k,\eta}|^2 \Bigr] \langle k,\eta\rangle^{2\sigma} d \eta
$$
and {for $\alpha<\frac12$,}
\begin{equation}\label{def-Nrho}
F[\rho](t,z) :=  \sup_{k\in \ZZ\setminus \{0\}}  e^{z \langle k, kt \rangle^\gamma } |\widehat \rho_k(t)| 
\langle k,kt\rangle^{\sigma} { |k|^{-\alpha}}.
\end{equation}
The parameter $z$ is the analyticity radius. We will eventually use these norms with a time-dependent $z(t)$ 
of the form $z(t)=\lambda_0(1+(1+t)^{-\delta})$, slowly decaying in time.

Let us  comment on these two norms.
First, these norms are classical and are very close to those used in \cite{MV,BMM}, with $\gamma=1$ corresponding to the analytic regularity. However, in \cite{MV}, the radius of analyticity -- parameter
$z$ -- it not time-dependent and the argument involves a Nash-Moser iteration scheme. On the other hand, \cite{BMM} makes use of of a flexibility of the analyticity radius to avoid the Nash-Moser process but 
relies on the space-time $L^2$ estimates for the inverse of the Penrose operator. We replace this by classical pointwise estimates on the resolvent. 
 
 Note also that using the Sobolev weight $\langle \eta \rangle^\sigma$ or $\langle k t \rangle^\sigma$
 is equivalent to working with analytic norms on derivatives of $g$ with respect to $v$ or $t$. For the Vlasov-Poisson system, when we consider $E \partial_v f$ as a source term, we loose one derivative (in $v$) and one factor $t$, which
 also can be seen as another loss of derivative. However if we differentiate $f$ with respect to $v$ and $E$ with
 respect to $t$, then we are back to a  loss of one derivative only, for the system posed for $f$, $\partial_v f$
 and $\partial_x E$. This very classical trick of "quasilinearization" leads to the introduction of the weights
 $\langle \eta \rangle^\sigma$ and $\langle k t \rangle^\sigma$.
   
 Adding this Sobolev weight also greatly improves behavior of the norms with respect to 
 "paraproducts". More precisely, for large $\eta$ and $\eta'$,
 \beq \label{para}
 {  \langle \eta \rangle^\sigma \langle \eta' \rangle^\sigma \over \langle \eta + \eta' \rangle^\sigma }
 \ge C_0 \min \Bigl( \langle \eta \rangle, \langle \eta' \rangle \Bigr)^\sigma,
 \eeq
 hence we gain a factor $\langle \min(\eta,\eta') \rangle^\sigma$ during multiplications. As $\sigma$
 can be large, this compensates a very small analyticity radius decay and produces
 a large extra decay for the electric field; see Proposition \ref{prop-rho}. This extra decay is not present when $\sigma = 0$.

 Note that the weight $e^{z \langle k, k t\rangle^\gamma}$ in the norm on $\rho$ already encodes the exponential
 decay of the electric field with time for analytic or Gevrey initial data.

 Let us now recall some properties of generator functions. 
 They are non negative, and all their derivatives are non negative and non decreasing in $z$.  
Moreover, generator functions have properties with respect to algebraic operations and differentiation. 
Namely they ``commute'' with products, sums and derivatives, making them a very versatile tool for existence and stability/instability results \cite{GrN6, GrN8}. 
For instance, for any function $f$,
\begin{equation}\label{dervG}
G[\partial_x^{\gamma/2} f] \le \partial_z G[ f ], \qquad G[\partial_v^{\gamma/2} f] \le \partial_z G[f] .
\end{equation}
Note that generator functions are functions of a "regularity index" $z$.
As a consequence, they allow to estimate all the analytic norms (depending on $z$) at the same
time. The evolution of these norms are turned into a simple non linear transport equation on $G$, allowing
a simple control on all the possible analytic norms all at once.


\section{Linear Landau damping}\label{sec-resolvent}


In this section we review the proof of linear Landau damping in the context of  norms based on the generator functions. The study via resolvent estimates is classical; see, for instance, \cite{Degond, Glassey, Lin,HKNR2}.


\subsection{Equation on the density}


In this section, we study the linearized Vlasov-Poisson system around $\mu(v)$, namely, the following linear problem 
\begin{equation}
\label{VP-lin}
\partial_t g + E(t,x+vt)\partial_v \mu  = 0, \qquad \partial_x E = \rho , 
\end{equation} 
with initial data $f^0(x,v)$. 
To solve \eqref{VP-lin}, we follow the standard strategy and first derive a closed equation on the electric field. 
Let $\Frho_k(t)$ be the Fourier transform of $\rho(t,x)$ in $x$, 
and $\Fg_{k,\eta}(t)$ the Fourier transform of $g(t,x,v)$ in $x$ and $v$. Note that as $\Frho_0(t)=0$ for all times, 
throughout this section, we shall only focus on the case when $k \not =0$. We have the following Lemma.

\begin{lemma}
Let $g$ be the unique solution to the linear problem \eqref{VP-lin}. There holds the following closed equation on the density
\begin{equation}
\label{eqs-rho0} \Frho_k(t) + \int_0^t (t-s) \widehat{\mu}(k(t-s)) \Frho_k(s) \; ds = \FS_k(t)
\end{equation}
with the source term 
$$
\FS_k(t) =  \hat f^0_{k,kt}.
$$
\end{lemma}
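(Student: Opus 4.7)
The plan is a direct Duhamel computation in Fourier space. The linear equation $\partial_t g = -E(t,x+vt)\,\partial_v \mu(v)$ can be integrated in $t$ from the initial datum $g(0,x,v)=f^0(x,v)$ to obtain
$$
g(t,x,v) = f^0(x,v) - \int_0^t E(s,x+vs)\,\partial_v \mu(v)\,ds.
$$
Since $\rho(t,x) = \int g(t,x-vt,v)\,dv$ by \eqref{new-rho}, I will take Fourier transforms in $x$ and then integrate out $v$ to produce a scalar equation for $\widehat\rho_k(t)$.

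The key observation is that the Fourier transform in $x$ of $g(t,x-vt,v)$ equals $e^{-ikvt}\widehat g_k(t,v)$, so that
$$
\widehat\rho_k(t) = \int_\RR e^{-ikvt}\widehat g_k(t,v)\,dv = \widehat g_{k,kt}(t).
$$
Plugging in the Duhamel formula above, the initial contribution gives directly $\int e^{-ikvt} \widehat f^0_k(v)\,dv = \widehat f^0_{k,kt} = \widehat S_k(t)$. For the forcing term, the Fourier transform in $x$ of $E(s,x+vs)$ at frequency $k$ is $e^{ikvs}\widehat E_k(s)$, and from $\partial_x E = \rho$ one has $\widehat E_k(s) = \widehat\rho_k(s)/(ik)$ for $k\neq 0$. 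Integrating against $\partial_v \mu(v)$ in $v$ then produces
$$
\int_\RR e^{-ikv(t-s)}\partial_v \mu(v)\,dv = ik(t-s)\,\widehat\mu(k(t-s)),
$$
using the standard identity $\widehat{\partial_v \mu}(\eta) = i\eta\,\widehat\mu(\eta)$. The factors $ik$ cancel, leaving exactly the Volterra-type identity
$$
\widehat\rho_k(t) + \int_0^t (t-s)\,\widehat\mu(k(t-s))\,\widehat\rho_k(s)\,ds = \widehat S_k(t).
$$

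There is no real obstacle here: the entire argument is bookkeeping in Fourier variables. The only subtleties are (a) keeping track of the sign and factor $ik$ coming from the Poisson equation $\partial_x E = \rho$ (which is why the $k=0$ mode must be excluded, as already noted in the text since $\widehat\rho_0(t)\equiv 0$), and (b) noticing the simplification $\widehat\rho_k(t) = \widehat g_{k,kt}(t)$, which is the manifestation in Fourier space of the shift to characteristics of free transport used to define $g$. Integration by parts in $v$ requires only that $\mu$ decays at infinity, which follows from \eqref{reg-mu}.
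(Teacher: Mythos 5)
Your proposal is correct and follows essentially the same route as the paper: both hinge on the identity $\widehat\rho_k(t)=\widehat g_{k,kt}(t)$ and on the fact that the Fourier transform of $E(t,x+vt)\partial_v\mu(v)$ produces the shifted factor $\widehat{\partial_v\mu}(\eta-kt)=i(\eta-kt)\widehat\mu(\eta-kt)$, combined with $\widehat E_k=\widehat\rho_k/(ik)$. The only cosmetic difference is that you integrate the equation in time first (Duhamel) and then Fourier transform, whereas the paper Fourier transforms first and then integrates the resulting ODE in $t$ before evaluating at $\eta=kt$.
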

\begin{proof} Taking the Fourier transform of \eqref{VP-lin} in both $x$ and $v$, we obtain 
$$ 
\partial_t \Fg_{k,\eta} + \FE_k(t) \widehat{\partial_v \mu}(\eta - kt) = 0
$$
since
$$
\iint e^{-i k x - i \eta v} E(t,x+vt) \partial_v \mu(v) dx dt = \FE_k(t) \widehat{\partial_v \mu}(\eta - kt) .
$$
The Lemma follows, upon noting that $\Frho_k(t) = \Fg_{k,kt}(t)$. 
\end{proof}


\subsection{Penrose stability}


In this section we introduce the Penrose condition in order to solve \eqref{eqs-rho0}. For any function $F$ in $L^2 (\RR_+)$, 
we recall that the Laplace transform of $F(t)$ is defined by 
$$ 
\cL[ F](\lambda) = \int_0^\infty e^{-\lambda t} F(t)\; dt 
$$
which is well-defined for any complex value $\lambda$ with $\Re \lambda >0$. 
Taking the Laplace transform of \eqref{eqs-rho0}, we  get 
\begin{equation}\label{Lap-rho} 
\cL[\Frho_k](\lambda) = \frac{\cL[\FS_k](\lambda)}{1 + \cL[t \widehat{\mu}(kt)](\lambda)} .
\end{equation}
The Penrose stability condition ensures that the symbol $1 + \cL[t \widehat{\mu}(kt)](\lambda)$ never vanishes. 
Precisely, we assume that
\begin{equation}\label{Penrose}
\inf_{k\in \ZZ; \Re \lambda\ge 0} |1 + \cL[t \widehat{\mu}(kt)](\lambda)| \ge \kappa_0 
\end{equation}
for some positive constant $\kappa_0$. Writing out the Laplace transform, the above gives \eqref{Penrose0}. 

Using the Penrose stability condition \eqref{Penrose}, Mouhot and Villani \cite{MV} 
obtained the boundedness of the density $\rho(t,x)$ in $L^2_{x,t}$ in term of the source $S(t,x)$. 
This estimate also play a role in \cite{BMM}. In the next section, we will derive {\it pointwise} bounds directly on the resolvent kernel 
via the Laplace-Fourier transform approach. 
The analysis is classical; see, e.g., Degond \cite{Degond} and Glassey and Schaeffer \cite{Glassey}. 
See also \cite{HKNR2} where the pointwise dispersive estimates are obtained for the resolvent kernel on the whole space.


\subsection{Resolvent estimates}


From \eqref{Lap-rho}, we can write 
\begin{equation}
\label{eqs-rholambda}\cL[\Frho_k](\lambda) = \cL[\FS_k](\lambda) +  \TK_k(\lambda) \cL[\FS_k](\lambda)
\end{equation} 
where we denote 
\begin{equation}\label{def-Glambda}
\TK_k(\lambda):= - \frac{\cL[t \widehat{\mu}(kt)](\lambda)}{1 +\cL[t \widehat{\mu}(kt)](\lambda)} .
\end{equation}
Thus, in order to derive pointwise estimates for $\Frho_k(t)$, we first derive bounds on the resolvent kernel 
$\TK_k(\lambda)$.

\begin{lemma}\label{lem-resolvent} 
Assume the Penrose condition \eqref{Penrose}. 
There is a positive constant $\theta_1< \theta_0$ so that the function $\TK_k(\lambda)$ is an analytic function in 
$\{\Re\lambda \ge -\theta_1|k| \}$. In addition, there is a universal constant $C_1$ such that 
\begin{equation}
\label{bd-resolvent}
 |\TK_k(\lambda)| \le {C_1 \over 1 + |k|^2 + |\Im \lambda |^2},
\end{equation}	
uniformly in $\lambda$ and $k\not =0$ so that $\Re \lambda = -\theta_1|k|$. 
\end{lemma}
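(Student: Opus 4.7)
The plan is to work directly with the explicit formula $\TK_k(\lambda)=-N_k(\lambda)/(1+N_k(\lambda))$ where $N_k(\lambda):=\cL[t\widehat\mu(kt)](\lambda)$, and to extract analyticity and the decay bound from corresponding properties of $N_k$. First I would observe that the assumption $|\widehat\mu(\eta)|\le C_0 e^{-\theta_0|\eta|}$ gives
\[
|e^{-\lambda t}\,t\,\widehat\mu(kt)|\le C_0\, t\, e^{-(\Re\lambda+\theta_0|k|)t},
\]
so the defining integral for $N_k(\lambda)$ converges absolutely and defines an analytic function on the half-plane $\{\Re\lambda>-\theta_0|k|\}$. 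Since $k\in\ZZ\setminus\{0\}$ we have $|k|\ge 1$, which guarantees uniform control in $k$ throughout.

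Next I would extend the Penrose lower bound off the imaginary axis by a simple perturbation argument in $\Re\lambda$. Writing $\lambda=\alpha+i\tau$, the $\alpha$-derivative is $\pa_\alpha N_k(\lambda)=-\cL[t^2\widehat\mu(kt)](\lambda)$, and the same exponential bound yields, for any $\theta_1<\theta_0$,
\[
\sup_{-\theta_1|k|\le\alpha\le 0}|\pa_\alpha N_k(\alpha+i\tau)|\le C_0\int_0^\infty t^2 e^{-(\theta_0-\theta_1)|k|t}\,dt\le \frac{C}{(\theta_0-\theta_1)^3|k|^3}\le C'.
\]
Combining this with the hypothesis $|1+N_k(i\tau)|\ge\kappa_0$ and integrating in $\alpha$ over an interval of length at most $\theta_1|k|$, I would pick $\theta_1>0$ small enough (independent of $k$) that
\[
|N_k(\alpha+i\tau)-N_k(i\tau)|\le \theta_1 |k|\cdot C' \le \tfrac{\kappa_0}{2},\qquad -\theta_1|k|\le \alpha\le 0.
\]
Hence $|1+N_k(\lambda)|\ge\kappa_0/2$ throughout that strip, which both gives analyticity of $\TK_k$ there and reduces the desired estimate to a pointwise bound on $N_k$.

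For the decay in $|\Im\lambda|$, I would perform two integrations by parts in $t$, using that $F(t):=t\widehat\mu(kt)$ satisfies $F(0)=0$ and $F'(0)=\widehat\mu(0)$:
\[
N_k(\lambda)=-\frac{\widehat\mu(0)}{\lambda^2}+\frac{1}{\lambda^2}\int_0^\infty e^{-\lambda t}F''(t)\,dt.
\]
Since $F''(t)=2k\widehat\mu'(kt)+k^2t\widehat\mu''(kt)$, the analyticity of $\widehat\mu$ (with the decay estimate \eqref{reg-mu} providing control of $\widehat\mu''$ via $\widehat{\langle v\rangle^2\mu}$) gives $|F''(t)|\le C|k|(1+|k|t)e^{-\theta_0'|k|t}$ for some $\theta_0'<\theta_0$, so the integral is bounded uniformly in $k$ and $\tau$ provided $\theta_1<\theta_0'$. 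This yields $|N_k(\lambda)|\le C/|\lambda|^2$ on the line $\Re\lambda=-\theta_1|k|$, where $|\lambda|^2=\theta_1^2k^2+\tau^2\gtrsim 1+|k|^2+\tau^2$. For bounded $|\lambda|$ a trivial estimate gives $|N_k|\le C$, and dividing by $|1+N_k|\ge\kappa_0/2$ produces the stated bound.

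The main obstacle I anticipate is the bookkeeping in step two: ensuring that $\theta_1$ can be chosen \emph{uniformly} in $k$, so that the perturbation from $\Re\lambda=0$ really is small across the whole strip of width $\theta_1|k|$. The point is that the length of the interval grows with $|k|$ while the derivative $\pa_\alpha N_k$ decays like $|k|^{-3}$, so their product stays small; once this is nailed down, the rest is routine integration by parts.
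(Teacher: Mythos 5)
Your proof is correct, and its central step takes a genuinely different route from the paper's. The decay estimate is essentially the same in both: two integrations by parts in $t$ give $|\cL[t\widehat\mu(kt)](\lambda)|\lesssim (\theta_1^2|k|^2+|\Im\lambda|^2)^{-1}\lesssim (1+|k|^2+|\Im\lambda|^2)^{-1}$ on the line $\Re\lambda=-\theta_1|k|$ (the paper uses the equivalent $(M_k^2-\partial_t^2)/(M_k^2-\lambda^2)$ device with $M_k=2\theta_1|k|$, which avoids dividing by $\lambda^2$; since on your line $|\lambda|\ge\theta_1|k|\ge\theta_1$, your plain $\lambda^{-2}$ is fine, at the cost of a $\theta_1$-dependent constant). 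The difference is in how the denominator $1+\cL[t\widehat\mu(kt)](\lambda)$ is bounded below off the imaginary axis. The paper splits into three regimes: $|k|\ge k_0$, where the crude bound $C|k|^{-2}$ makes $|\cL|\le\frac12$; $|\Im\lambda|\ge\tau_0$, where the decay bound does the same; and the remaining compact set of $(k,\Im\lambda)$, where a continuity argument off the Penrose condition \eqref{Penrose} furnishes a small $\theta_1$. You instead propagate the Penrose lower bound from $\Re\lambda=0$ across the whole strip $-\theta_1|k|\le\Re\lambda\le 0$, uniformly in $k$ and $\tau$, via the derivative bound $|\partial_\alpha N_k|\lesssim(\theta_0-\theta_1)^{-3}|k|^{-3}$. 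This is arguably cleaner: it gives a single quantitative choice of $\theta_1$ depending only on $C_0,\theta_0,\kappa_0$, with no case distinction. (For $\Re\lambda\ge 0$ the Penrose condition applies directly, so combined with your strip estimate you indeed get analyticity of $\TK_k$ on all of $\{\Re\lambda\ge-\theta_1|k|\}$, as the lemma asserts; state this explicitly.)

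Two small repairs, neither a real gap. First, your displayed perturbation estimate reads $|N_k(\alpha+i\tau)-N_k(i\tau)|\le\theta_1|k|\cdot C'\le\kappa_0/2$ after you have already replaced the derivative bound $C(\theta_0-\theta_1)^{-3}|k|^{-3}$ by a $k$-independent constant $C'$; as written this fails for large $|k|$ at fixed $\theta_1$. Keep the $|k|^{-3}$: the product with the interval length $\theta_1|k|$ is
\[
\theta_1|k|\cdot\frac{C}{(\theta_0-\theta_1)^3|k|^3}\le \frac{C\,\theta_1}{(\theta_0-\theta_1)^3},
\]
which is $\le\kappa_0/2$ for $\theta_1$ small depending only on $C_0,\theta_0,\kappa_0$ --- exactly the point you make in your closing paragraph, so only the display needs correcting. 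Second, the boundary term from the double integration by parts has the wrong sign: with $F(0)=0$ and $F'(0)=\widehat\mu(0)=1$ one gets $N_k(\lambda)=+\widehat\mu(0)/\lambda^2+\lambda^{-2}\int_0^\infty e^{-\lambda t}F''(t)\,dt$; this is immaterial since only $|N_k|$ is used. Finally, your bound on $F''$ also requires an exponential bound on $\widehat\mu'$, which \eqref{reg-mu} does not state directly but which follows from the bounds on $\widehat\mu$ and $\widehat\mu''$ (the latter via $\widehat{\langle v\rangle^2\mu}$), consistent with the slightly smaller rate $\theta_0'$ you allowed.
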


\begin{proof} 	
We first give estimates on $\cL[t \widehat{\mu}(kt)](\lambda)$. By definition, the Laplace transform 
$$ 
\cL[t \widehat{\mu}(kt)](\lambda) = \int_0^\infty e^{-\lambda t} t \Fmu(k t) \; dt 
$$
is well-defined for $\Re \lambda \gg 1$. In addition, using the assumption \eqref{reg-mu}, 
we in fact have for $\Re \lambda \ge -\theta_1 |k|$ and any $\theta_1\le \frac12\theta_0$,  
\begin{equation}\label{bd-Lmu} 
|\cL[t \widehat{\mu}(kt)](\lambda)| \le C_0 \int_0^\infty e^{-\Re \lambda t} t e^{-\theta_0 |kt|} \; dt \le C_1|k|^{-2}
\end{equation}
for some constant $C_1$ independent of $k$ and $\theta_1$. 
This implies that $\TK_k(\lambda)$ is a meromorphic function in $\{\Re\lambda \ge -\theta_1 |k| \}$ for each $k \not =0$. 
	
In addition, integrating by parts in time, we get  
$$
\begin{aligned}  
\cL[t \widehat{\mu}(kt)](\lambda) &= \int_0^\infty \frac{(M^2_k-\partial_t^2)(e^{-\lambda t})}{M^2_k-\lambda^2} t \Fmu(k t) \; dt 
\\&= \int_0^\infty \frac{e^{-\lambda t}}{M^2_k-\lambda^2} (M^2_k-\partial_t^2)(t \Fmu(k t)) \; dt - \frac{\Fmu (0)}{M^2_k - \lambda^2} 
\end{aligned}$$
for any constant $M_k$ and for any $\lambda \not =M_k$. 
Note that 
$$
\Fmu(0)= \int \mu(v)\; dv= 1.
$$ 
Hence, for $\Re \lambda = - \theta_1|k|$ and for $M_k = 2\theta_1|k|$, we have 
$$
\begin{aligned}  
| \cL[t \widehat{\mu}(kt)](\lambda)| &\le C_0\int_0^\infty \frac{e^{\theta_1 |kt|}}{\theta_1^2 |k|^2+ |\Im \lambda|^2} (|k| + |k|^2 t) 
e^{-\theta_0 |kt |} \; dt + \frac{1}{\theta_1^2|k|^2 + |\Im \lambda|^2}
\end{aligned}$$
As $0\le \theta_1 \le \frac12 \theta_0$ and $k \not =0$, this yields 
\begin{equation} \label{bd-Lmu1}
\begin{aligned}  
| \cL[t \widehat{\mu}(kt)](\lambda)| 
\le C_1 (1 + |k|^2 + |\Im \lambda |^2)^{-1}, 
\end{aligned}\end{equation}
for any $\lambda$ on the line $\{\Re \lambda = -\theta_1 |k|\}$. 
Here, we stress that the constant $C_1$ depends only on $\mu$ through the assumption \eqref{reg-mu}, 
but is independent of $k$ and $\theta_1 \in [0,\frac12 \theta_0]$. 
	
Let us turn to $\TK_k$.	
The estimate \eqref{bd-Lmu}  shows that there is a $k_0$ so that $|\cL[t \widehat{\mu}(kt)](\lambda)| \le \frac12$ 
for all $|k|\ge k_0$. That is, $\TK_k(\lambda)$ is in fact analytic in $\{\Re\lambda \ge -\theta_1 |k| \}$, and satisfies 
$$ 
| \TK_k(\lambda)| \le 2 C_1 |k|^{-2} 
$$
for $|k|\ge k_0$ and for $\Re\lambda \ge -\theta_1 |k|$. 
In addition, for $\Re \lambda = -\theta_1 |k|$, with $\theta_1 \in [0,\frac12 \theta_0]$, using \eqref{bd-Lmu1}, 
we have 
\begin{equation} \label{bd-Gmu1}
\begin{aligned}  
|  \TK_k(\lambda)| 
\le 2C_1 (1 + |k|^2 + |\Im \lambda |^2)^{-1} 
\end{aligned}\end{equation}for $|k|\ge k_0$. 	
	
We next turn to the case when $1\le |k|\le k_0$. In view of \eqref{bd-Lmu1}, there is a constant $\tau_0$ so that 
$$
\cL[t \widehat{\mu}(kt)](\lambda)| \le \frac12
$$	
for all $\lambda$ so that $\Re \lambda = -\theta_1|k|$ and $|\Im \lambda| \ge \tau_0$. 
This proves that $1+\cL[t \widehat{\mu}(kt)](\lambda)$ never vanishes in this range of $\lambda$, 
which again yields \eqref{bd-Gmu1}. 

For $|\Im\lambda|\le \tau_0$ and $|k|\le k_0$, 
as the Penrose condition holds for $\Re \lambda =0$, there is a small positive constant $\theta_1$ so that 
$$
|1 + \cL[t \widehat{\mu}(kt)](\lambda)| \ge \frac12 \kappa_0 
$$	
for $\Re \lambda \ge -\theta_1|k|$ (recalling that $1\le |k|\le k_0$). 
Hence, the bounds on $ \TK_k(\lambda)$ follow from those on $\cL[t \widehat{\mu}(kt)](\lambda)$. 
The Lemma follows. 
\end{proof}


\subsection{Pointwise estimates}


In this section, we prove the following Proposition which gives pointwise estimates on solutions $\Frho_k(t)$ 
to the linear problem \eqref{eqs-rho0}. 

\begin{proposition}\label{prop-exprho} 
Assume the Penrose condition \eqref{Penrose}. The unique solution $\Frho_k(t)$ to the linear problem \eqref{eqs-rho0} 
can be expressed by 
\begin{equation}\label{exp-rho} 
\Frho_k(t) = \FS_k(t) + \int_0^t \FK_k(t-s) \FS_k(s)\; ds 
\end{equation}
where the kernel $\FK_k(t)$ satisfies 
$$ 
|\FK_k(t)| \le C_1 e^{- \theta_1 |kt|} 
$$
for some positive constants $\theta_1$ and $C_1$. 
\end{proposition}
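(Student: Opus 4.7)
The strategy is to identify $\FK_k(t)$ as the inverse Laplace transform of the resolvent symbol $\TK_k(\lambda)$ from Lemma \ref{lem-resolvent}. The representation \eqref{exp-rho} is then automatic from \eqref{eqs-rholambda}: since $\cL[\Frho_k] = \cL[\FS_k] + \TK_k \cdot \cL[\FS_k]$, the Laplace convolution theorem gives $\Frho_k = \FS_k + \FK_k * \FS_k$, provided we define
$$
\FK_k(t) := \frac{1}{2\pi i} \int_{\gamma - i\infty}^{\gamma + i\infty} e^{\lambda t} \TK_k(\lambda)\, d\lambda
$$
for any $\gamma > 0$ large enough that $1 + \cL[t \widehat{\mu}(kt)](\lambda)$ has no zeros in $\{\Re\lambda \ge \gamma\}$. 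The integral converges absolutely in view of the $\tau^{-2}$ decay from \eqref{bd-resolvent}.

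The key step is to shift the Bromwich contour from $\Re\lambda = \gamma$ to $\Re\lambda = -\theta_1 |k|$ using Cauchy's theorem. Lemma \ref{lem-resolvent} ensures $\TK_k$ is analytic in this strip, so the only thing to verify is that the horizontal connecting segments $\{\alpha \pm iR : -\theta_1|k| \le \alpha \le \gamma\}$ contribute vanishingly as $R \to \infty$. This requires a uniform-in-$\alpha$ version of the bound \eqref{bd-resolvent}. I would obtain it by repeating the integration-by-parts argument used in the proof of Lemma \ref{lem-resolvent}: for any $\alpha \in [-\theta_1|k|, \gamma]$ one has $|\cL[t\widehat\mu(kt)](\alpha + i\tau)| \lesssim (1 + |k|^2 + \tau^2)^{-1}$ with constants independent of $\alpha$, and hence (after handling the finitely many $(k,\alpha)$-ranges where the denominator $1 + \cL[t\widehat\mu(kt)]$ is a priori small, exactly as in the case analysis of Lemma \ref{lem-resolvent}) the same decay passes to $\TK_k(\alpha + i\tau)$.

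Once the contour has been shifted, I estimate directly
$$
|\FK_k(t)| \le \frac{e^{-\theta_1 |k| t}}{2\pi} \int_{-\infty}^{\infty} |\TK_k(-\theta_1|k| + i\tau)|\, d\tau \le \frac{C_1 e^{-\theta_1 |k| t}}{2\pi} \int_{-\infty}^{\infty} \frac{d\tau}{1 + |k|^2 + \tau^2}.
$$
The last integral equals $\pi(1+|k|^2)^{-1/2} \le \pi/|k|$, and since $k \in \ZZ \setminus \{0\}$ we have $|k| \ge 1$, so
$$
|\FK_k(t)| \le \frac{C_1}{2 |k|} e^{-\theta_1 |k| t} \le C_1 e^{-\theta_1 |kt|},
$$
absorbing constants. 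The main obstacle is thus the uniform control on $\TK_k$ in the strip needed to legitimise the contour shift; this is a routine but careful extension of the pointwise bound already established on the single line $\Re\lambda = -\theta_1|k|$.
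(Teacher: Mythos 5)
Your proof is correct and follows essentially the same route as the paper: define $\FK_k$ as the inverse Laplace (Bromwich) transform of $\TK_k(\lambda)$, shift the contour to $\{\Re\lambda=-\theta_1|k|\}$ using the analyticity from Lemma \ref{lem-resolvent}, and integrate the bound \eqref{bd-resolvent} to get the exponential decay. Your extra care about uniform bounds on the horizontal segments is a sensible refinement of a step the paper leaves implicit, but it does not change the argument.
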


Note that, as $f^0$ is analytic, $\hat S_k(t)=\hat f^0_{k,kt}$ goes exponentially fast to $0$, and thus, using (\ref{VP-lin}),
do does $\rho_k$.

\begin{proof}
 The representation \eqref{exp-rho} follows directly from \eqref{eqs-rholambda}, upon taking the inverse Laplace transform, 
 where the kernel $\FK_k(t)$ is the inverse Laplace transform of the resolvent kernel $\TK_k(\lambda)$ constructed 
 in Lemma \ref{lem-resolvent}. Precisely, we have 
$$
\FK_k(t) = \frac{1}{2\pi i} \int_{\{ \Re \lambda = \gamma_0\}} e^{\lambda t} \TK_k(\lambda)\; d\lambda
$$ 
for some large positive constant $\gamma_0$. By Lemma \ref{lem-resolvent}, 
$\TK_k(\lambda)$ is analytic in $\{\Re\lambda \ge -\theta_1|k| \}$, 
and thus we can deform the complex contour of integration from 
$\{ \Re \lambda = \gamma_0\}$ into $\{ \Re \lambda = - \theta_1|k|\}$, 
on which the resolvent estimate \eqref{bd-resolvent} holds. Therefore,  
$$
|\FK_k(t)| \le C_0 \int_{\{ \Re \lambda = -\theta_1 |k|\}} e^{-\theta_1|kt|} (1 + |k|^2 + |\Im \lambda|^2)^{-1}\; d\lambda 
\le C_1 e^{-\theta_1|kt|}.$$ 
The Proposition follows. 
\end{proof}


\subsection{Gevrey estimates}


We now derive Gevrey estimates on $\rho$.

\begin{lemma}\label{lem-analyticrho} 
Assume the Penrose condition \eqref{Penrose}. 
Let $\Frho_k(t)$ be the unique solution to the linear problem \eqref{eqs-rho0}, and let $F[\rho](t,z)$ 
be the corresponding generator function \eqref{def-Nrho}. Then, there holds  
\begin{equation}\label{bd-propagator} 
F[\rho](t,z) \le F[S](t,z) + C\int_0^t e^{-\frac14\theta_1 (t-s)}F[S](s,z)\; ds 
\end{equation}	
for $t\ge 0$ and for $z\in [0,\frac{\theta_1}{2}]$, with $\theta_1$ defined as in Proposition \ref{prop-exprho}. 
\end{lemma}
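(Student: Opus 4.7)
The plan is to start from the resolvent representation \eqref{exp-rho} of Proposition \ref{prop-exprho}, multiply through by the Gevrey-Sobolev weight $e^{z\langle k,kt\rangle^\gamma}\langle k,kt\rangle^\sigma$, and estimate each factor by comparing frequencies at times $t$ and $s$. First, I would establish the elementary ``triangle'' inequality
$$ \langle k,kt\rangle \;\le\; \langle k,ks\rangle + |k(t-s)|, \qquad 0\le s\le t,$$
which is immediate from $|kt|\le|ks|+|k(t-s)|$ together with $|ks|\le \langle k,ks\rangle$ after squaring. Since $\gamma\in(0,1]$, concavity gives $(a+b)^\gamma\le a^\gamma+b^\gamma$, whence
$$ \langle k,kt\rangle^\gamma \le \langle k,ks\rangle^\gamma + |k(t-s)|^\gamma, \qquad \langle k,kt\rangle^\sigma \le \langle k(t-s)\rangle^\sigma \langle k,ks\rangle^\sigma.$$

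Applying these inequalities to \eqref{exp-rho} together with the kernel bound $|\FK_k(t-s)|\le C_1 e^{-\theta_1|k(t-s)|}$, we obtain for each $k\neq 0$
$$ e^{z\langle k,kt\rangle^\gamma}\langle k,kt\rangle^\sigma|\Frho_k(t)| \le e^{z\langle k,kt\rangle^\gamma}\langle k,kt\rangle^\sigma|\FS_k(t)| + C_1\int_0^t \Psi_k(t-s)\cdot e^{z\langle k,ks\rangle^\gamma}\langle k,ks\rangle^\sigma|\FS_k(s)|\,ds,$$
where
$$ \Psi_k(\tau) \;:=\; \langle k\tau\rangle^\sigma\, e^{z|k\tau|^\gamma - \theta_1|k\tau|}.$$
The last factor on the right is pointwise bounded by $F[S](s,z)$, so taking the supremum over $k\neq 0$ after bounding $\Psi_k(t-s)$ uniformly will finish the proof; the only point to check is that $\Psi_k$ has integrable decay $\lesssim e^{-\theta_1(t-s)/4}$.

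The key pointwise estimate is therefore the scalar bound
$$ \langle r\rangle^\sigma e^{z r^\gamma - \theta_1 r} \le C_\sigma\, e^{-\theta_1 r/4}, \qquad r\ge 0,$$
applied with $r=|k(t-s)|$. For $r\le 1$ this is trivial. For $r\ge 1$, using $\gamma\le 1$ we have $r^\gamma\le r$, and since $z\le \theta_1/2$ the exponent is at most $-\theta_1 r/2$; the polynomial $\langle r\rangle^\sigma$ is then absorbed into $e^{-\theta_1 r/4}$ at the cost of a $\sigma$-dependent constant. Because $|k|\ge 1$, this gives $\Psi_k(t-s)\le C e^{-\theta_1(t-s)/4}$ uniformly in $k\neq 0$. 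Substituting back and taking the supremum in $k$ yields \eqref{bd-propagator}.

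The only mildly subtle point is the interplay between the three regimes in the bound on $\Psi_k$: one must use $\gamma\le 1$ (to convert $r^\gamma$ into $r$ at large $r$), $z\le \theta_1/2$ (to absorb a half of the exponential decay into the Gevrey gain), and the remaining quarter of the exponent to swallow the Sobolev weight $\langle k(t-s)\rangle^\sigma$ coming from the paraproduct-type inequality on $\langle k,kt\rangle^\sigma/\langle k,ks\rangle^\sigma$. All other steps are bookkeeping on the representation \eqref{exp-rho}.
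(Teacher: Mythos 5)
Your proof is correct and follows essentially the same route as the paper's: both start from the representation \eqref{exp-rho} of Proposition \ref{prop-exprho} and absorb the growth of the weight $e^{z\langle k,kt\rangle^\gamma}\langle k,kt\rangle^{\sigma}$ between times $s$ and $t$ into the kernel decay $e^{-\theta_1|k|(t-s)}$, using $z\le \tfrac{\theta_1}{2}$, $\gamma\le 1$ and $|k|\ge 1$ to retain the leftover factor $e^{-\frac14\theta_1(t-s)}$. One small correction: the inequality $\langle k,kt\rangle^{\sigma}\le \langle k(t-s)\rangle^{\sigma}\langle k,ks\rangle^{\sigma}$ is false as stated without a constant (Peetre's inequality gives $\langle k,kt\rangle\le \sqrt{2}\,\langle k(t-s)\rangle\,\langle k,ks\rangle$), but the resulting factor $2^{\sigma/2}$ is harmlessly absorbed into the constant $C$, so the argument stands.
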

Using the definition of $F[S]$ and the fact that $f^0$ is Gevrey, we obtain that $F[S](t,z)$ is uniformly bounded
in time, provided $z$ is small enough. As a consequence, $F[\rho]$ is uniformly bounded in time. By definition of
$F[\rho]$, this leads to an exponential decay of $\FE_k(t)$ for $k \ne 0$.

\begin{proof} In view of the definition \eqref{def-Nrho} and the expression \eqref{exp-rho}, for $k\not =0$, we compute 
$$	
\begin{aligned}
& e^{ z \langle k, kt \rangle^\gamma } |\widehat \rho_k(t)| 
\langle k,kt\rangle^{\sigma} 
\\
&\le e^{ z \langle k, kt \rangle^\gamma }
\langle k,kt\rangle^{\sigma} \Big[  |\FS_k(t)| + C_1 \int_0^t e^{-\theta_1|k|(t-s)} |\FS_k(s)| \; ds\Big].
\end{aligned}$$	
It is sufficient to treat the time integral term, since the other term is exactly $F[S](t,z)$. 
As $z\in [0,\frac12 \theta_1]$ and $0<\gamma\le 1$, we have 
$$ 
e^{z\langle k, kt\rangle^\gamma } e^{-\frac12 \theta_1 |k|(t-s)} \le e^{z \langle k, ks\rangle^\gamma} .
$$ 
Similarly, by considering $s\in [0,t/2]$ and $s\in [t/2,t]$, we get 
$$ 
\langle k, kt \rangle^\sigma e^{-\frac14\theta_1|k|(t-s)} \le C\langle k,ks\rangle^\sigma
$$
for some universal constant $C$ that is independent of $k$ and $t$. 
The Lemma follows, upon noting $|k|\ge 1$. 
\end{proof}



\section{Nonlinear Landau damping}\label{sec-MV}


We now turn to the proof of the nonlinear Landau damping.


\subsection{Bounds on $g$}


We first prove the following 

\begin{proposition}\label{prop-G} 
There is a constant $C_0$ so that 
the following differential inequality holds
\begin{equation}\label{FG1}
\partial_t G[g(t)](z) \le C_0  F[\rho](t,z)  G[g(t)]^{1/2}(z) + C_0(1+t) F[\rho](t,z)  \partial_z G[g(t)](z).
\end{equation}
\end{proposition}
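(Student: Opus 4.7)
The strategy is to differentiate $G[g(t)](z)$ in time, substitute the Vlasov equation \eqref{VP-g} written in Fourier variables, and estimate the resulting linear and nonlinear pieces. In one dimension, taking the Fourier transform of \eqref{VP-g} in $(x,v)$ and observing that each mode $\hat E_\ell(t)e^{i\ell(x+vt)}$ of $E(t,x+vt)$ acts on the velocity frequency as a translation by $\ell t$, one finds
\begin{equation*}
\partial_t \hat g_{k,\eta} = -i(\eta-kt)\hat E_k(t)\hat\mu(\eta-kt) - i(\eta-kt)\sum_{\ell\neq 0}\hat E_\ell(t)\hat g_{k-\ell,\eta-\ell t},
\end{equation*}
the sum being effectively over $\ell\neq 0$ since $\hat\rho_0=0$ and hence $\hat E_0=0$. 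Differentiating $G[g](z)$ under the sum/integral gives real parts of $\bar{\hat g}_{k,\eta}\partial_t \hat g_{k,\eta}$ and $\overline{\partial_\eta \hat g_{k,\eta}}\,\partial_\eta\partial_t \hat g_{k,\eta}$, weighted by $e^{2z\langle k,\eta\rangle^\gamma}\langle k,\eta\rangle^{2\sigma}$, each splitting into a linear and a nonlinear piece.

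For the linear piece, I would apply Cauchy--Schwarz in $\eta$ together with the analyticity bound \eqref{reg-mu} on $\hat\mu$ and $\partial_\eta\hat\mu$, and the subadditivity $\langle k,\eta\rangle^\gamma\le \langle k,kt\rangle^\gamma + \langle\eta-kt\rangle^\gamma$. The residual factor $e^{2z\langle \eta-kt\rangle^\gamma}$ is absorbed into the exponential decay of $\hat\mu(\eta-kt)$ once $z\le\theta_0/2$, leaving a $k$-slice bound of the form $|\hat E_k(t)|\,e^{z\langle k,kt\rangle^\gamma}\langle k,kt\rangle^\sigma \cdot G[g]^{1/2}(z)$. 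Writing $|\hat E_k|\le |\hat\rho_k|/|k|$, summing over $k\neq 0$, and invoking $\sum_{k\neq 0}|k|^{-2}<\infty$ together with the supremum defining $F[\rho]$ yields the first bound $C_0 F[\rho](t,z)G[g]^{1/2}(z)$. The $\partial_\eta$-analogue is identical: Leibniz produces either $\hat\mu$ or $\partial_\eta\hat\mu$, both controlled by \eqref{reg-mu}.

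For the nonlinear piece, the key identity is $\eta-kt=(\eta-\ell t)-t(k-\ell)$, yielding
\begin{equation*}
|\eta-kt|\le|\eta-\ell t|+t|k-\ell|\le(1+t)\langle k-\ell,\eta-\ell t\rangle.
\end{equation*}
Combining this with the subadditivity $\langle k,\eta\rangle^\gamma\le \langle k-\ell,\eta-\ell t\rangle^\gamma + \langle\ell,\ell t\rangle^\gamma$ and the paraproduct inequality \eqref{para} on the Sobolev weight, the weight $e^{2z\langle k,\eta\rangle^\gamma}\langle k,\eta\rangle^{2\sigma}$ is distributed so that the bulk lands on the high-frequency factor $\hat g_{k-\ell,\eta-\ell t}$, while $\hat E_\ell$ carries a weight consistent with the supremum in $F[\rho](t,z)$. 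The factor $\langle k-\ell,\eta-\ell t\rangle$ is then split as $\langle\cdot\rangle^\gamma\cdot\langle\cdot\rangle^{1-\gamma}$; the $\gamma$-part is exactly what $\partial_z G[g]$ produces, and the remaining $\langle\cdot\rangle^{1-\gamma}$ (nontrivial only when $\gamma<1$) is absorbed into the paraproduct gain $\min^{-\sigma}$ of \eqref{para}, provided $\sigma$ is chosen larger than $1-\gamma$. Cauchy--Schwarz in $(k,\eta)$ and $\ell^2$-summability in $\ell$ via $|\hat E_\ell|\le|\hat\rho_\ell|/|\ell|$ and $\sum_{\ell\ne 0}|\ell|^{-2}<\infty$ then yield the second bound $C_0(1+t)F[\rho](t,z)\partial_z G[g](z)$. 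The $\partial_\eta$-version of the nonlinear term splits by the Leibniz rule into a piece where $\partial_\eta$ hits the prefactor $(\eta-kt)$, contributing a bound of type $F[\rho]G[g]^{1/2}$, and a piece where $\partial_\eta$ hits $\hat g_{k-\ell,\eta-\ell t}$, handled exactly as above with $\hat g$ replaced by $\partial_\eta\hat g$ (which is also controlled by $G[g]$).

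The main obstacle is the nonlinear estimate, specifically bridging the gap between the full derivative $\langle k-\ell,\eta-\ell t\rangle$ produced by the post-streaming factor $(\eta-kt)$ and the $\gamma$-fractional derivative $\langle\cdot\rangle^\gamma$ captured by $\partial_z G[g]$. For analytic data ($\gamma=1$) the gap vanishes and the argument is essentially immediate; for Gevrey data ($\gamma<1$) the residual $\langle\cdot\rangle^{1-\gamma}$ loss must be compensated by the paraproduct gain of \eqref{para}, which is what forces $\sigma$ to be chosen sufficiently large. Once $\sigma$ is fixed in this way, the remaining steps reduce to routine Cauchy--Schwarz arguments and summability checks, and assembling the linear and nonlinear bounds produces \eqref{FG1}.
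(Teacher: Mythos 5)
Your setup and your treatment of the linear term $\hat E_k\widehat{\partial_v\mu}(\eta-kt)$ match the paper, but the nonlinear estimate has a genuine gap, located exactly at what you call ``the main obstacle.'' You estimate the transport term $\sum_{\ell}(\eta-kt)\hat E_\ell\,\hat g_{k-\ell,\eta-\ell t}$ termwise in absolute value, distribute the weight $A_{k,\eta}=e^{z\langle k,\eta\rangle^\gamma}\langle k,\eta\rangle^\sigma$ by subadditivity plus \eqref{para}, and propose to absorb the residual loss $\langle k-\ell,\eta-\ell t\rangle^{1-\gamma}$ into the paraproduct gain $\min(\langle \ell,\ell t\rangle,\langle k-\ell,\eta-\ell t\rangle)^{-\sigma}$. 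This fails in the main regime, where $\hat g_{k-\ell,\eta-\ell t}$ carries the high frequency and the minimum is $\langle \ell,\ell t\rangle$: the paraproduct gain then lives at the \emph{low} frequency and cannot control a loss of $\langle k-\ell,\eta-\ell t\rangle^{1-\gamma}$ at the \emph{high} frequency (take $\ell=1$, $t$ bounded, $|\eta|\to\infty$: the gain is $O(1)$ while the loss is $|\eta|^{1-\gamma}$), and no choice of $\sigma$ repairs this. Indeed, the absolute-value majorant of the transport term scales like a full derivative, $|\eta_0|$, on the high-frequency block, whereas $\partial_z G[g]$ only supplies $|\eta_0|^\gamma$; so a purely termwise estimate cannot produce \eqref{FG1} when $\gamma<1$. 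Your remark that for $\gamma=1$ the argument is essentially immediate is correct (there $\langle k-\ell,\eta-\ell t\rangle\le 2\langle k,\eta\rangle\langle \ell,\ell t\rangle$ lets you place $\gamma/2=1/2$ derivatives on each $g$-factor, with only a low-frequency leftover for \eqref{para} to absorb), but the Gevrey range $\gamma\in(\frac13,1)$, which is the point of Theorem \ref{theo-LD}, is not reached by your argument.

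What is missing is a cancellation, not a sharper weight count. The paper writes the weighted transport term as the piece in which the weight is transported unchanged, $\sum_\ell(\eta-kt)\hat E_\ell\,\widehat{Ag}_{k-\ell,\eta-\ell t}$, plus a commutator piece carrying the factor $A_{k,\eta}-A_{k-\ell,\eta-\ell t}$. The first piece contributes nothing to $\partial_t G[g]$: after the change of variables $k\to k-\ell$, $\ell\to-\ell$, $\eta\to\eta+\ell t$ and using $\hat E_\ell=\overline{\hat E_{-\ell}}$, its real part vanishes identically (display \eqref{div-free}), which is the Fourier-side expression of the skew-adjointness of $E(t,x+vt)(\partial_v-t\partial_x)$. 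Only the commutator survives, and the mean-value-type inequality \eqref{algebra-ex} shows it gains the factor $\langle \ell,\ell t\rangle\bigl(\langle k,\eta\rangle^{1-\gamma}+\langle k-\ell,\eta-\ell t\rangle^{1-\gamma}\bigr)^{-1}$, i.e.\ exactly the missing $1-\gamma$ derivatives at high frequency, at the price of one low-frequency factor $\langle \ell,\ell t\rangle$ which is what $\sigma>2$ is used to absorb. With that structure, the rest of your outline (Cauchy--Schwarz, $|\hat E_\ell|\le|\hat\rho_\ell|/|\ell|$, summation in $\ell$, and the analogous treatment of the $\partial_\eta\hat g$ component) goes through as in the paper; note only that for the lower-order term $\sum_\ell\hat E_\ell\,\hat g_{k-\ell,\eta-\ell t}$ arising from the $\eta$-differentiation you need a small extra factor (e.g.\ $|\ell|^{-\gamma/2}$, traded against $\langle k\rangle^{\gamma/2}\langle k-\ell\rangle^{\gamma/2}$ and hence $\partial_z G$) to make the $\ell$-sum converge, since $\sum_{\ell\neq0}|\ell|^{-1}$ diverges.
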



\begin{proof} For convenience, set 
$$A_{k,\eta} = e^{z\langle k,\eta\rangle^\gamma} \langle k,\eta \rangle^\sigma$$
be the symbol of a Fourier multiplier operator $A$. Occasionally, we write $\widehat{Ag}_{k,\eta} = A_{k,\eta}\Fg_{k,\eta}$. Using the triangle inequalities
\begin{equation}\label{basic}
\begin{aligned}
\langle k,\eta \rangle &\le 2\langle k', \eta' \rangle \langle k-k', \eta - \eta' \rangle, \\
\langle k, \eta \rangle &\le \langle k', \eta' \rangle + \langle k - k', \eta - \eta' \rangle ,
\end{aligned}
\end{equation}
we have 
\begin{equation}
\label{basic-A} A_{k,\eta} \le C_0 A_{k',\eta'} A_{k-k', \eta - \eta'},
\end{equation}
for some universal constant $C_0$. We shall use the inequalities \eqref{basic} and \eqref{basic-A} repeatedly throughout the proof. 
From the definition, we compute 
\begin{equation}\label{key-boundGg}
\begin{aligned}
&\partial_t G[g(t)](z) 
\le \sum_{k\in \ZZ}  \int_\RR \Bigl( \partial_t|\widehat g_{k,\eta}|^2 + \partial_t|\partial_\eta \widehat g_{k,\eta}| ^2\Bigr) A_{k,\eta}^2 d \eta .
\end{aligned} \end{equation}
To bound the integral, we take the Fourier transform of the equation \eqref{VP-g}, yielding 
\begin{equation}\label{eqs-g} 
\partial_t \Fg_{k,\eta} 
= - \FE_k(t) \widehat{\partial_v \mu}(\eta - kt)- i\sum_{l\in \ZZ} (\eta - k t) \FE_l(t) \widehat g_{k-l,\eta-lt}(t)
\end{equation}
for each $k\in \ZZ$ and $\eta \in \RR$. Thus, multiplying by $A_{k,\eta}$, we write 
\begin{equation}\label{eqs-g1} 
\begin{aligned}
\partial_t \widehat{Ag}_{k,\eta} 
&= - A_{k,\eta}\FE_k(t) \widehat{\partial_v \mu}(\eta - kt) - i\sum_{l\in \ZZ} (\eta - k t) \FE_l(t) \widehat{Ag}_{k-l,\eta-lt}(t)
\\
&\qquad - i\sum_{l\in \ZZ} (\eta - k t) [A_{k,\eta} - A_{k-l, \eta - lt}]\FE_l(t) \widehat g_{k-l,\eta-lt}(t) .
\end{aligned}
\end{equation} 
in which we note that, by symmetry $k\to k-l, l\to -l$ and $\FE_l=\overline{\FE_{-l}}$, the second term on the right does not contribute into \eqref{key-boundGg}, since 
\bea \label{div-free} &&\Re \sum_{k,l\in \ZZ} \int_{\RR} (\eta - k t) i\FE_l(t) \widehat{Ag}_{k-l,\eta-lt}(t) \overline{\widehat{Ag}_{k,\eta}(t)} \; d\eta\\ &&=\Re \sum_{k,l\in \ZZ} \int_{\RR} (\eta - (k-l) t) i\FE_{-l}(t) \widehat{Ag}_{k,\eta+lt}(t) \overline{\widehat{Ag}_{k-l,\eta}(t)} \; d\eta \notag\\ &&=-\Re \sum_{k,l\in \ZZ} \int_{\RR} (\eta - k t) \overline{i\FE_{l}}(t) \widehat{Ag}_{k,\eta}(t) \overline{\widehat{Ag}_{k-l,\eta-lt}(t)} \; d\eta= 0.\notag\eea 		
Let us estimate the first term coming from the right of \eqref{eqs-g1}. Using \eqref{basic-A} and the H\"older inequality, we have 
$$
\begin{aligned}
&\sum_{k\in \ZZ}  \int_\RR A_{k,\eta}|\FE_k(t) \widehat{\partial_v \mu}(\eta - kt) \widehat{Ag}_{k,\eta}(t)| d \eta
\\
& \le C_0\sum_{k\in \ZZ} A_{k,kt}|\FE_k(t)| 
\int_\RR e^{z\langle \eta - kt\rangle} |\widehat{\partial_v \mu}(\eta - kt)\widehat{Ag}_{k,\eta}(t)| \langle \eta-kt\rangle^{\sigma} d \eta
\\
& \le C_0 \sum_{k\in \ZZ} A_{k,kt}|\FE_k(t)| \Big(\int_\RR  |\widehat{Ag}_{k,\eta}(t)|^2\; d \eta\Big)^{1/2}
\\
& { \le C_0 \Big(\sum_{k\in \ZZ} A^2_{k,kt}|k|^{-2}|\Frho_k(t)|^2\Big)^{1/2} \Big(\sum_{k\in \ZZ}\int_\RR  |\widehat{Ag}_{k,\eta}(t)|^2\; d \eta\Big)^{1/2}}
\\&\le C_0 F[\rho(t)](z) G[g(t)]^{1/2}(z),
\end{aligned}
$$
{ upon recalling the definition \eqref{def-Nrho} of the norm $F[\cdot]$, $\FE_0(t) = 0$ and 
$
\FE_k(t) = \frac{1}{ik}\hat\rho_k(t).
$ 
}

Next, we estimate the last term coming from \eqref{eqs-g1}; namely, 
$$N_1: = \sum_{k,l\in \ZZ}  \int_\RR (\eta - k t) \frac{A_{k,\eta} - A_{k-l, \eta - lt}}{A_{l,lt}A_{k-l, \eta - lt}}\overline{\widehat{Ag}_{k,\eta}}(t) A_{l,lt}\FE_l(t) \widehat{Ag}_{k-l,\eta-lt}(t) \; d\eta.$$ 
We first bound the weight in the integral. We consider two cases: $\langle l, lt\rangle\ge \frac12 {\langle k, \eta\rangle}$ and $\langle k-l, \eta-lt\rangle\ge \frac12 {\langle k, \eta\rangle}$.

~\\
{\bf Case 1: $\langle l, lt\rangle\ge \frac12 {\langle k, \eta\rangle}$.} In this case, we bound $|A_{k,\eta} - A_{k-l, \eta - lt}| \le |A_{k,\eta}| + |A_{k-l, \eta - lt}|$, and use the fact that $|\eta - k t|\le \langle k,\eta\rangle \langle t\rangle$ and $|\eta - k t|\le \langle k-l, \eta - lt\rangle \langle t\rangle$. This yields 
$$
\begin{aligned}
\frac{|\eta - k t| |A_{k,\eta} - A_{k-l, \eta - lt}|}{A_{l,lt}A_{k-l, \eta - lt}} 
&\lesssim \langle k-l, \eta - lt\rangle^{-\sigma + 1} \langle t\rangle + \langle k, \eta \rangle \langle t\rangle \langle l, lt \rangle^{-\sigma} 
\\&\lesssim \langle t\rangle (\langle k-l\rangle^{-\sigma+1} + \langle l\rangle^{-\sigma + 1}), 
\end{aligned}
$$ 
upon recalling that $\sigma>2$ and $l\not =0$. 

~\\
{\bf Case 2: $\langle k-l, \eta-lt\rangle\ge \frac12 {\langle k, \eta\rangle}$.} In this case, we use following elementary inequality for $x\ge y$,  
\begin{equation} \label{algebra-ex} 
e^{\langle x\rangle^\gamma} \langle x\rangle^\sigma - e^{\langle y\rangle^\gamma} \langle y\rangle^\sigma \lesssim \frac{|x-y|}{\langle x\rangle^{1-\gamma} + \langle y\rangle^{1-\gamma}} e^{\langle x\rangle^\gamma}  \langle x\rangle^\sigma .
\end{equation}

Using \eqref{algebra-ex} and the definition of $A_{k,\eta}$, we bound 
$$
\begin{aligned}
|A_{k,\eta} - A_{k-l, \eta - lt}| 
&\le \frac{\langle l,lt\rangle }{\langle k,\eta\rangle^{1-\gamma} + \langle k-l, \eta - lt\rangle^{1-\gamma}} \Big[ A_{k,\eta} +A_{k-l,\eta - lt}\Big]. 
\end{aligned}$$ 
Using this and recalling that $\langle k-l, \eta-lt\rangle\ge \frac12 {\langle k, \eta\rangle}$, we obtain  
$$
\begin{aligned}
\frac{|\eta - k t| |A_{k,\eta} - A_{k-l, \eta - lt}|}{A_{l,lt}A_{k-l, \eta - lt}} 
&\lesssim  \frac{|\eta - kt|\langle l,lt\rangle^{-\sigma + 1} }{\langle k,\eta\rangle^{1-\gamma} + \langle k-l, \eta - lt\rangle^{1-\gamma}} .
\end{aligned}
$$
Note again that $|\eta - k t|\le \langle k,\eta\rangle \langle t\rangle$ and $|\eta - k t|\le \langle k-l, \eta - lt\rangle \langle t\rangle$. We thus get   
$$
\begin{aligned}
\frac{|\eta - k t| |A_{k,\eta} - A_{k-l, \eta - lt}|}{A_{l,lt}A_{k-l, \eta - lt}} 
&\lesssim  \langle k,\eta\rangle^{\gamma/2} \langle k-l, \eta - lt\rangle^{\gamma/2} \langle t\rangle \langle l, lt \rangle^{-\sigma + 1}
\\
&\lesssim  \langle k,\eta\rangle^{\gamma/2} \langle k-l, \eta - lt\rangle^{\gamma/2} \langle l\rangle^{-\sigma + 1}
\end{aligned}
$$ 
upon recalling that $\sigma>2$ and $l\not =0$.



Combining the above estimates, we thus have  
$$
\begin{aligned} 
N_1 &\le \langle t\rangle  \sum_{k,l\in \ZZ} (\langle k-l\rangle^{-\sigma+1} + \langle l\rangle^{-\sigma + 1} )
\\&\quad \times \int_\RR \langle k,\eta\rangle^{\gamma/2} \langle k-l, \eta - lt\rangle^{\gamma/2} 
|\widehat{Ag}_{k,\eta}(t) A_{l,lt}\FE_l(t) \widehat{Ag}_{k-l,\eta-lt}(t)| \; d\eta .
\end{aligned}$$ 
Recalling $\widehat{E}_l = \frac{1}{il}\widehat{\rho}_l$, { the definition \eqref{def-Nrho} of $F[\cdot]$,} 
and using the inequality $2ab \le a^2 + b^2$, we bound the integral in $\eta$ by 
$$
\begin{aligned}
& \int_\RR \langle k,\eta\rangle^{\gamma/2} \langle k-l, \eta - lt\rangle^{\gamma/2} 
|\widehat{Ag}_{k,\eta}(t) A_{l,lt}\widehat{E}_l(t) \widehat{Ag}_{k-l,\eta-lt}(t)| \; d\eta
\\&\le  F[\rho](t,z)
\int_\RR \Big[ \langle k,\eta\rangle^{\gamma} |\widehat{Ag}_{k,\eta}(t)|^2 
+ \langle l\rangle^{{2\alpha-2}} \langle k-l, \eta - lt\rangle^{\gamma} |\widehat{Ag}_{k-l,\eta-lt}(t)|^2\Big] \; d\eta .
\end{aligned}
$$
{ Note that $2\alpha-2<-1$, since $\alpha<1/2$.} Therefore, {since $\sigma>2$,} 
$$
\begin{aligned} 
N_1 &\le \langle t\rangle  F[\rho](t,z)
\sum_{k,l \in \ZZ} (\langle k-l\rangle^{-\sigma+1} + \langle l\rangle^{-\sigma + 1} )
\\&\quad \times\int_\RR \Big[ \langle k,\eta\rangle^{\gamma} 
|\widehat{Ag}_{k,\eta}(t)|^2 + \langle l\rangle^{{2\alpha-2}} 
\langle k-l, \eta - lt\rangle^{\gamma} |\widehat{Ag}_{k-l,\eta-lt}(t)|^2\Big] \; d\eta
\\
&\le C_0 \langle t\rangle F[\rho](t,z)  \partial_z G[g](t,z).
\end{aligned}
$$ 
This yields the desired bounds for the first term in \eqref{key-boundGg}.


Finally, the integral term involving $|\partial_t \partial_\eta \widehat g_{k,\eta} |$ is treated similarly, 
upon differentiating the equation (\ref{eqs-g}) in $\eta$. More precisely
\begin{equation}\label{eqs-gd} 
\begin{aligned}
\partial_t \partial_\eta \Fg_{k,\eta} 
&= - \FE_k(t) \partial_\eta \widehat{\partial_v \mu}(\eta - kt)
- i\sum_{l\in \ZZ} (\eta - k t) \FE_l(t) \partial_\eta \widehat g_{k-l,\eta-lt}(t)
\\&\quad - i\sum_{l\in \ZZ}  \FE_l(t) \widehat g_{k-l,\eta-lt}(t) .
\end{aligned}\end{equation}
The first two terms can be estimated exactly as in the previous case. It remains to bound the term involving the last term. 
Precisely, using \eqref{basic-A} and the H\"older inequality, we estimate 
$$
\begin{aligned} 
&N_2=\sum_{k,l \in \ZZ}  \int_{\RR} A_{k,\eta}^2 | \FE_l(t) \Fg_{k-l,\eta-lt}(t) \partial_\eta \Fg_{k,\eta}(t) | d\eta
\\&\le \sum_{l,k \in \ZZ}  \int_{\RR}  \frac{A_{k,\eta}}{A_{l,lt} A_{k-l,\eta - lt}} A_{l,lt}| \FE_l(t) | |l|^{-\gamma/2} 
\langle k-l\rangle^{\gamma/2} A_{k-l,\eta - lt}| \widehat g_{k-l,\eta-lt}(t) |
\\&\quad \times \langle k \rangle^{\gamma/2} A_{k,\eta}|\partial_\eta \Fg_{k,\eta}|\; d\eta
\\&\le F[\rho](t,z) 
\sum_{l,k \in \ZZ}\int_{\RR} \frac{A_{k,\eta}}{A_{l,lt} A_{k-l,\eta - lt}}  |l|^{\alpha-1-\gamma/2} 
\langle k-l\rangle^{\gamma/2} A_{k-l,\eta - lt}| \widehat g_{k-l,\eta-lt}(t) |
\\&\quad \times  \langle k \rangle^{\gamma/2} A_{k,\eta}|\partial_\eta \Fg_{k,\eta}|\; d\eta
\end{aligned} 
$$
As before, either $\langle l,l t\rangle\geq \frac 12 \langle k,\eta\rangle$ or $\langle k-l,k-l t\rangle \geq \frac 12 \langle k,\eta\rangle$. As a consequence,
$$
\frac{A_{k,\eta}}{A_{l,lt} A_{k-l,\eta - lt}} \lesssim  \langle l-k\rangle^{-\sigma} +
\langle l\rangle^{-\sigma}.
$$ 
Therefore,
\begin{align*}
&N_2\lesssim F[\rho](t,z) \sum_{l,k \in \ZZ}\Big (\langle l-k\rangle^{-\sigma} +
\langle l\rangle^{-\sigma}\Big)\\ &\qquad\times \Big(  |l|^{2\alpha-2-\gamma}\int_{\RR}
\langle k-l\rangle^\gamma A_{k-l,\eta - lt}^2| \widehat g_{k-l,\eta-lt}(t) |^2
d\eta  
+\int_{\RR} \langle k \rangle^\gamma A_{k,\eta}^2 |\partial_\eta \Fg_{k,\eta}|^2\; d\eta\Big)
\\&\qquad \le{ F[\rho](t,z)}  \partial_z G[g(t)](z)
\end{align*}
upon recalling that $\alpha<1/2$ and $\sigma>2$. 
The Proposition follows. 
\end{proof}


\subsection{A first bound on the electric field}


\begin{lem}\label{lem-gE} Define $F[\rho](t,z)$ as in \eqref{def-Nrho}. There holds  
$$
F[\rho](t,z) 
\le G[g(t)]^{1/2}(z)
$$
for any $z,t\ge 0$.
\end{lem}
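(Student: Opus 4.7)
The plan is to view the claim $F[\rho](t,z)\le G[g(t)]^{1/2}(z)$ as a weighted one-dimensional trace inequality. The starting point is the identity $\widehat{\rho}_k(t)=\widehat g_{k,kt}(t)$, a direct Fourier computation from $\rho(t,x)=\int g(t,x-vt,v)\,dv$ that was already used in the derivation of \eqref{eqs-rho0}. So the target is a pointwise-in-$\eta$ bound on $\widehat g_{k,\eta}(t)$ evaluated at $\eta=kt$, expressed in terms of a weighted $L^2_\eta$ norm of both $\widehat g$ and $\partial_\eta\widehat g$; the derivative term in the definition of $G[g]$ is present precisely to allow such a trace bound.

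The mechanism is the fundamental theorem of calculus in $\eta$. Assuming the standard decay of $\widehat g_{k,\eta}(t)$ as $\eta\to\pm\infty$, I would write
\[
|\widehat g_{k,kt}(t)|^2 \;=\; -\int_{kt}^{\pm\infty} 2\,\Re\bigl(\overline{\widehat g_{k,\eta}(t)}\,\partial_\eta \widehat g_{k,\eta}(t)\bigr)\,d\eta,
\]
where the upper limit is chosen to be $+\infty$ when $kt\ge 0$ and $-\infty$ otherwise, so that $|\eta|\ge|kt|$ throughout the interval of integration. The inequality $2|a||b|\le|a|^2+|b|^2$ then controls $|\widehat g_{k,kt}(t)|^2$ by the integral of $|\widehat g_{k,\eta}|^2+|\partial_\eta\widehat g_{k,\eta}|^2$ over that half-line.

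The key (and essentially only nontrivial) step is to absorb the constant weight $e^{2z\langle k,kt\rangle^\gamma}\langle k,kt\rangle^{2\sigma}$ into the integrand. On the integration domain one has $|\eta|\ge|kt|$, and since $\langle k,\cdot\rangle$ depends on its second argument only through its square, $\langle k,\eta\rangle\ge\langle k,kt\rangle$ there. Both $e^{2z\langle\cdot\rangle^\gamma}$ and $\langle\cdot\rangle^{2\sigma}$ are monotone increasing in $\langle k,\cdot\rangle$, so the outer constant weight is bounded above by the $\eta$-dependent weight, which can be slid inside the integral. The resulting integrand is exactly the summand appearing in $G[g(t)](z)$ restricted to a half-line in $\eta$; extending to the full real line and dominating by the sum over all $k$ yields an upper bound by $G[g(t)](z)$. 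Taking $\sup_{k\ne 0}$ and then a square root gives the stated inequality with constant one.

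I do not expect any genuine obstacle: there is no cancellation to exploit, no resolvent to invert, and no commutator to track. The statement is a direct consequence of how $F[\rho]$ and $G[g]$ are defined, and in particular of the design choice to include the term $|\partial_\eta\widehat g_{k,\eta}|^2$ in $G[g]$ so that the pointwise evaluation of $\widehat g_{k,\eta}$ at $\eta=kt$ is controlled for free.
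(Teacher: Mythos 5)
Your argument is correct and is essentially the paper's own proof: the paper's key step \eqref{sup-g} is exactly your fundamental-theorem-of-calculus trace bound $|\widehat g_{k,kt}|^2\lesssim\int_{|\eta'|\ge|kt|}|\widehat g_{k,\eta'}||\partial_\eta\widehat g_{k,\eta'}|\,d\eta'$, followed by the same monotonicity of the weight $A_{k,\eta}=e^{z\langle k,\eta\rangle^\gamma}\langle k,\eta\rangle^\sigma$ on $\{|\eta'|\ge|kt|\}$, Young's inequality, and summation over $k$. No further comment needed.
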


The proof below applies to the case $d=1$. If $d > 1$, the argument can be trivially adapted by including higher order 
derivatives of $\hat g$ in $\eta$ up to order $d$.

\begin{proof}
From \eqref{new-rho}, we have 
$
\Frho_k(t) = \Fg_{k,kt}(t).
$
In view of \eqref{def-Nrho}, we need to bound $\Fg_{k,\eta}(t)$ pointwise in $\eta$. 
Observe that for any $k,\eta$ and $z\ge 0$, we have 
\begin{equation}\label{sup-g} 
\begin{aligned} 
| \Fg_{k,\eta}|^2 
&\le \int_{|\eta'| \ge |\eta|} |\Fg_{k,\eta'}||\partial_\eta \Fg_{k,\eta'}| \; d\eta' 
\le A_{k,\eta}^{-2}\int_\RR A_{k,\eta'}^2 |\Fg_{k,\eta'}|
| \partial_\eta \Fg_{k,\eta'}| \; d\eta'.
\end{aligned}\end{equation}
Hence,  
$$ 
\begin{aligned}
F[\rho](t,z) 
& \le  
\sup_{k\in \ZZ\setminus \{0\}}  A_{k,kt}|\Fg_{k,kt}| 
\\
& \le  
\Big(\sup_{k\in \ZZ\setminus \{0\}}  \int_\RR A_{k,\eta'}^2 |\Fg_{k,\eta'}|
| \partial_\eta \Fg_{k,\eta'}| \; d\eta'\Big)^{1/2}
\\
& \le  
\Big(\sum_{k\in \ZZ}  \int_\RR A_{k,\eta'}^2 |\Fg_{k,\eta'}|
| \partial_\eta \Fg_{k,\eta'}| \; d\eta'\Big)^{1/2}
\end{aligned}$$
which yields the Lemma. 	
\end{proof}


\subsection{Analyticity radius}


The radius of analyticity of our solutions will decrease with time. We denote it by $\lambda(t)$.
We now rewrite (\ref{FG1}) for a time-dependent $z$. For this, let
$$
\widetilde G[g](t,z) = G[g(t)](\lambda(t) z) , \qquad \widetilde F[\rho](t,z) =  F[\rho] (t,\lambda(t)z). 
$$
We have, for $z\in [0,1]$, 
\begin{equation}\label{FG1-lambda}
\begin{aligned}
\partial_t \widetilde G[g](t,z) - \Big[ \lambda'(t) z + C_0(1+t) \widetilde F[\rho](t,z) \Big] \partial_z \widetilde G[g](t,z) 
\le C_0\widetilde F[\rho](t,z) \widetilde G[g]^{1/2}(t,z)
\end{aligned}\end{equation}
Note that this equation is a transport like equation on $\widetilde G[g]$. 
If, for the domain $z\in [0,1]$,  the characteristics are outgoing at $z=0$ and $z=1$,
then $\widetilde G[g](t,z)$ for all positive $t$ and $z\in [0,1]$ may be bounded by its initial values at $t=0$. 
At $z = 0$, the characteristics are always outgoing
since the term in square brackets is manifestly positive. The characteristics are outgoing at $z = 1$ provided
\begin{equation}\label{cond-lambda}  
\lambda'(t) + C_0(1+t) F[\rho](t,\lambda(t)) \le 0 .
\end{equation}
We first choose  
\begin{equation}\label{def-lambda} 
\lambda(t) = \lambda_0 + \lambda_0 (1+t)^{-\delta}
\end{equation}
for some positive $\lambda_0$ such that
$\lambda_0 \le 1$ and $\lambda_0 \le \lambda_1 / 4$, and 
for an arbitrarily small positive constant $\delta \ll1$ so that 
\begin{equation}
\label{Gevrey-index} 3\gamma > 1 + 2\delta, 
\end{equation}
where $\gamma> 1/3$ is the index for the Gevrey-$\gamma$ regularity. Now if we assume that for all times
\begin{equation}\label{decay-rho0}
F[\rho](t,\lambda(t)) \le \sqrt \epsilon \langle t\rangle^{-\sigma + 1} 
\end{equation}
for $\epsilon$ sufficiently small, then (\ref{cond-lambda}) is satisfied for all times provided  $\sigma > 3 + \delta$.
Note that, at $t=0$, using Lemma \ref{lem-gE}, we get 
$$ 
F[\rho](0,\lambda_1) \le G[f^0]^{1/2}(\lambda_1).
$$
Hence, if 
$$
G[f^0](\lambda_1) \le \epsilon 2^{1 - \sigma} \le \epsilon,
$$ 
\eqref{cond-lambda} holds at $t=0$ and thus for sufficiently small times. 

Using (\ref{decay-rho0}) we can use (\ref{FG1-lambda}) in order to bound $G[g]$.
Namely, as long as \eqref{cond-lambda} is valid, the differential inequality \eqref{FG1-lambda} becomes,
after dividing the equation by  $\widetilde G[g]^{1/2}(t,z(t))$,
$$
\partial_t \widetilde G[g]^{1/2}(t,z(t))\le C_0 \sup_{0 \le z\le 1} \widetilde F[\rho](t,z) ,
$$ 
where $z(t)=z$ can be chosen arbitrarily 
in $[0,1]$ and the path $z(t')$ remains in the interval $[0,1]$ for all $t'\in [0,t]$.
This yields 
\begin{equation}
\label{bootstrap-g}
\begin{aligned} 
G[g(t)](\lambda(t)) &\le G[f^0](\lambda_1) + C_0 \left(\int_0^t F[\rho](s, \lambda(s)) \; ds \right)^2
\\&\le \epsilon +  C_0 \left(\int_0^t \sqrt\epsilon \langle s\rangle^{-\sigma + 1}\; ds\right)^2
\\&\le (1 + C_0)\epsilon, 
\end{aligned}\end{equation} 
for sufficiently small $\epsilon$.  

In the next section, we shall prove that under the bootstrap assumption \eqref{bootstrap-g}, for sufficiently small $\epsilon$, 
$F[\rho](t,\lambda(t))$ satisfies \eqref{decay-rho0}.
Theorem \ref{theo-LD} will then follow.


\subsection{Bounds on $\rho$}


This section is devoted to the bound on $F[\rho](t,\lambda(t))$. More precisely, we will prove
\begin{proposition}\label{prop-rho} 
Let $\sigma>3 + \delta$ and let $\lambda_0 < \lambda_1 / 4$.  Then provided $\eps$ is small enough,
\beq \label{assG}
\sup_{0\le t\le T}G[g](t,\lambda(t)) \le 2 \eps  (1 + C_0)
\eeq
implies  
\begin{equation}\label{decay-Frho} 
F[\rho](t,\lambda(t)) \le \sqrt \epsilon \langle t\rangle^{-\sigma + 1} 
\end{equation}
for $t\in [0,T]$, hence \eqref{cond-lambda} is valid for $t\in [0,T]$. 
\end{proposition}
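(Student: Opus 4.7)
\medskip
\noindent\textbf{Proof plan.} The plan is to convert the Vlasov equation on $g$ into a nonlinear Volterra equation for $\hat\rho_k(t)$, apply the linear kernel estimate of Lemma~\ref{lem-analyticrho}, and then absorb the nonlinear source into a decaying bound on $F[\rho](t,\lambda(t))$ using the bootstrap hypothesis \eqref{assG}. Concretely, I would integrate the Fourier transform of the equation for $\hat g_{k,\eta}$ (equation \eqref{eqs-g}) in time, evaluate at $\eta = kt$ (using $\widehat{\partial_v\mu}(k(t-s)) = ik(t-s)\widehat\mu(k(t-s))$ and $\widehat E_l = \widehat\rho_l/(il)$), and read off
$$
\widehat\rho_k(t) + \int_0^t (t-s)\widehat\mu(k(t-s))\widehat\rho_k(s)\,ds = \widehat f^0_{k,kt} + \widehat N_k(t),
$$
where the nonlinear source is
$$
\widehat N_k(t) = -\sum_{l\ne 0}\int_0^t \frac{k(t-s)}{l}\,\widehat\rho_l(s)\,\widehat g_{k-l,\,kt-ls}(s)\,ds.
$$
Treating the right-hand side as a source for the linear Volterra equation and applying Lemma~\ref{lem-analyticrho} (whose proof carries over to the time-dependent radius $z=\lambda(t)$, because $\lambda$ is non-increasing and we choose $\lambda_0$ so that $\lambda(t)\le \theta_1/2$), I obtain
$$
F[\rho](t,\lambda(t)) \lesssim F[f^0](t,\lambda(t)) + F[N](t,\lambda(t)) + \int_0^t e^{-\theta_1(t-s)/4}\bigl(F[f^0](s,\lambda(s))+F[N](s,\lambda(s))\bigr)ds.
$$

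\medskip
\noindent The initial-data term is painless: by the argument of Lemma~\ref{lem-gE} applied to $f^0$, $|\widehat f^0_{k,kt}| \le \sqrt\epsilon\, e^{-\lambda_1\langle k,kt\rangle^\gamma}\langle k,kt\rangle^{-\sigma}$, and since $\lambda(t)\le 2\lambda_0\le \lambda_1/2$, this gives $F[f^0](t,\lambda(t)) \le \sqrt\epsilon\, e^{-\lambda_1\langle k,kt\rangle^\gamma/2}$, which is super-polynomially small in $t$ for $k\ne 0$ and absorbs effortlessly into $\tfrac12\sqrt\epsilon \langle t\rangle^{-\sigma+1}$.

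\medskip
\noindent The heart of the argument is estimating $F[N](t,\lambda(t))$. I would use the pointwise bound $A^{(\lambda(s))}_{k-l,\,kt-ls}|\widehat g_{k-l,kt-ls}(s)| \le G[g]^{1/2}(s,\lambda(s)) \le C\sqrt\epsilon$ (same fundamental-theorem argument as in Lemma~\ref{lem-gE}), together with the weight splitting
$$
\frac{A^{(\lambda(t))}_{k,kt}}{A^{(\lambda(s))}_{l,ls}\,A^{(\lambda(s))}_{k-l,\,kt-ls}}\ \lesssim\ \frac{1}{\min(\langle l,ls\rangle,\langle k-l,kt-ls\rangle)^\sigma},
$$
which follows from $\lambda(t)\le\lambda(s)$ and subadditivity of $x\mapsto x^\gamma$ on the exponential part, and from the paraproduct gain \eqref{para} on the Sobolev part. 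Combining these pieces, together with the decomposition $k(t-s) = (kt-ls)-(k-l)s$ (so that $|k|(t-s)\lesssim \langle k-l,kt-ls\rangle + |k-l|\langle s\rangle$), reduces the problem to showing
$$
\sqrt\epsilon \int_0^t F[\rho](s,\lambda(s))\, \sum_{l\ne 0}\frac{|k|(t-s)}{|l|\,\min(\langle l,ls\rangle,\langle k-l,kt-ls\rangle)^\sigma}\, ds \lesssim \sqrt\epsilon\,\langle t\rangle^{-\sigma+1}.
$$
Substituting the bootstrap $F[\rho](s,\lambda(s))\le\sqrt\epsilon\langle s\rangle^{-\sigma+1}$ and splitting the $l$-sum according to whether the minimum is attained on $\langle l,ls\rangle$ (off-resonant) or $\langle k-l,kt-ls\rangle$ (near-echo), the summability in $l$ comes from $\sigma>2$ and the integrability in $s$ from the combined decay $\langle s\rangle^{-\sigma+1}$ and $\langle t-s\rangle$ factors, with the condition $\sigma>3+\delta$ chosen precisely so that both splits close.

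\medskip
\noindent The main obstacle is the last step: handling the resonant (echo) contribution where $kt\approx ls$ and the kernel factor $|k|(t-s)$ is not trivially absorbed. This requires the paraproduct gain to convert the $\sigma$-weight into $\min^{-\sigma}$ decay, and ultimately the condition $\sigma>3+\delta$ combined with $\gamma>1/3$ (through \eqref{Gevrey-index}) to ensure the cascade of echoes does not destroy the Gevrey radius. Once this is in hand, summing $F[f^0]+F[N] \le \sqrt\epsilon\langle t\rangle^{-\sigma+1}\cdot(C\epsilon + e^{-c(1+t)^\gamma})$ gives \eqref{decay-Frho} for $\epsilon$ small, thereby closing the bootstrap.
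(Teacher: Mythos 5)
Your setup coincides with the paper's: the same Volterra equation with source \eqref{def-FS}, the same use of Lemma \ref{lem-analyticrho} with the decreasing radius, and the same treatment of the initial-data term via Lemma \ref{lem-gE}. The gap is in the step you yourself flag as the ``main obstacle,'' and it is not a technicality: your weight splitting bounds the exponential ratio $e^{\lambda(t)\langle k,kt\rangle^\gamma - \lambda(s)\langle l,ls\rangle^\gamma - \lambda(s)\langle k-l,kt-ls\rangle^\gamma}$ by $1$ (using only $\lambda(t)\le\lambda(s)$ and subadditivity) and keeps only the Sobolev gain $\min(\langle l,ls\rangle,\langle k-l,kt-ls\rangle)^{-\sigma}$. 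The reduced inequality you then state is false in the echo regime. Take $k=1$, $l=2$, $s$ within an $O(1)$ window of $t/2$: there $\min(\langle l,ls\rangle,\langle k-l,kt-ls\rangle)=\langle 1,t-2s\rangle=O(1)$, while $|k|(t-s)/|l|\approx t/4$, so after substituting $F[\rho](s,\lambda(s))\le\sqrt\epsilon\langle s\rangle^{-\sigma+1}$ this single pair $(k,l)$ already contributes $\sim\epsilon\, t\,\langle t\rangle^{-\sigma+1}$ to the left-hand side, a full factor of $t$ beyond the target $\sqrt\epsilon\langle t\rangle^{-\sigma+1}$; no choice of $\sigma$ helps, since the Sobolev weights give no decay at exact resonance, and smallness of $\epsilon$ cannot beat a factor growing in $t$. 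The paper closes this regime precisely by \emph{not} discarding the exponential factor: in Case 2 of the proof of Lemma \ref{lem-bdS} (i.e.\ $\langle l,ls\rangle\ge\frac12\langle k,kt\rangle$, $|kt-ls|\le t/2$, $k<l<2k$) it keeps $e^{(\lambda(t)-\lambda(s))\langle k,kt\rangle^\gamma}$, quantifies the radius decay as $\lambda(s)-\lambda(t)\ge\theta_\delta|l-k|/(|k|\langle t\rangle^\delta)$ (see \eqref{radius-slow}), and uses an $xe^{-ax}\lesssim a^{-1/(\gamma-\delta)}$ bound to absorb the linearly growing kernel factor $(t-s)\sim|l-k|t/l$; the surviving power $|k|^{-2+\frac{1-\gamma}{\gamma-\delta}}$ is bounded exactly when $3\gamma\ge1+2\delta$, which is how \eqref{Gevrey-index} and $\gamma>1/3$ actually enter. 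You name-check this condition, but in your reduction there is no term left on which it can act, so the argument as proposed cannot close.

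A secondary, repairable issue: the hypothesis of the Proposition is only \eqref{assG} on $G[g]$, so substituting $F[\rho](s,\lambda(s))\le\sqrt\epsilon\langle s\rangle^{-\sigma+1}$ into the nonlinear term assumes the conclusion unless you add a continuity argument. The paper avoids this by keeping $\zeta(t)=\sup_{0\le s\le t}F[\rho](s,\lambda(s))\langle s\rangle^{\sigma-1}$ on the right-hand side, deriving $\zeta(t)\le C_1G[f^0]^{1/2}(\lambda_1)+C_1\sqrt\epsilon\,\zeta(t)$ from Lemma \ref{lem-bdS} and the claim \eqref{claimCR}, and absorbing the last term for small $\epsilon$; with the echo estimate in place, your plan should be reorganized in this form.
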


This Proposition, combined with \ref{bootstrap-g} ends  the proof of Theorem \ref{theo-LD} using
a simple bootstrap argument.

\begin{proof} Integrating \eqref{eqs-g} in time and recalling that $
\Frho_k(t) = \Fg_{k,kt}(t),
$
we obtain 
\begin{equation}\label{eqs-rho} 
\Frho_k(t) + \int_0^t (t-s) \widehat{\mu}(k(t-s)) \Frho_k(s) \; ds = \FS_k(t)
\end{equation}
where the source term is defined by 
\begin{equation}
\label{def-FS}
\FS_k(t): =  \Ff^0_{k,kt}  - \sum_{l \not =0} \int_0^t \frac{k(t-s)}{l}\Frho_l(s) \Fg_{k-l,kt-ls}(s)\; ds .
\end{equation}
Hence, applying  Lemma \ref{lem-analyticrho}, we obtain 
	\begin{equation}\label{bd-propagator-re} 
	F[\rho](t,\lambda(t)) \le F[S](t,\lambda(t)) + C\int_0^t e^{-\frac14\theta_1 (t-s)}F[S](s,\lambda(s))\; ds 
	\end{equation}	
for $t\ge 0$. Note that in the above, we used that $\lambda(t)$ is decreasing and the monotonicity of the norm 
$F[S](s,z)$ in $z$. It thus suffices to give bounds on $F[S](t,\lambda(t))$. We have the following lemma. 

\begin{lemma}\label{lem-bdS}
Let $\gamma \in (1/3, 1]$ and $\sigma >3$. Under the assumption \eqref{assG}, there holds 
$$
\begin{aligned}
F[S](t,\lambda(t)) 
&\le e^{-\lambda_1 \langle t \rangle^\gamma / 2} G[f^0]^{1/2}(\lambda_1) 
+ C \sqrt \epsilon  \langle t\rangle^{-\sigma + 1} \sup_{0\le s\le t} F[\rho](s,\lambda(s)) \langle s\rangle^{\sigma-1}.
\end{aligned}
$$
\end{lemma}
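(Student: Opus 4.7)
The plan is to split $\FS_k(t)$ according to \eqref{def-FS} into its linear transport term $\Ff^0_{k,kt}$ and its nonlinear convolution term, and estimate each contribution to $F[S](t,\lambda(t))$ separately.

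For the linear term I exploit the analyticity gap $\lambda(t)\le 2\lambda_0\le \lambda_1/2$. Writing
$$
e^{\lambda(t)\langle k,kt\rangle^\gamma}=e^{-(\lambda_1-\lambda(t))\langle k,kt\rangle^\gamma}\cdot e^{\lambda_1\langle k,kt\rangle^\gamma}\le e^{-\lambda_1\langle t\rangle^\gamma/2}\,e^{\lambda_1\langle k,kt\rangle^\gamma}
$$
for $k\ne0$ (using $\langle k,kt\rangle\ge\langle t\rangle$), one factors out the claimed decay $e^{-\lambda_1\langle t\rangle^\gamma/2}$. The remaining supremum $\sup_{k\ne 0}e^{\lambda_1\langle k,kt\rangle^\gamma}\langle k,kt\rangle^\sigma|\Ff^0_{k,kt}|$ is bounded by $G[f^0]^{1/2}(\lambda_1)$ through the same sup-to-$L^2$ interpolation already used in the proof of Lemma \ref{lem-gE}.

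For the nonlinear term I combine three ingredients. First, the pointwise bounds
$$
|\Frho_l(s)|\le F[\rho](s,\lambda(s))\langle l,ls\rangle^{-\sigma}e^{-\lambda(s)\langle l,ls\rangle^\gamma}
$$
(direct from \eqref{def-Nrho}) and
$$
|\Fg_{k-l,kt-ls}(s)|\le G[g]^{1/2}(s,\lambda(s))\langle k-l,kt-ls\rangle^{-\sigma}e^{-\lambda(s)\langle k-l,kt-ls\rangle^\gamma}
$$
(via the same sup-to-$L^2$ argument used in Lemma \ref{lem-gE}), with $G[g]^{1/2}\le C\sqrt\epsilon$ by \eqref{assG}. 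Second, concavity of $x\mapsto x^\gamma$ on the decomposition $(k,kt)=(l,ls)+(k-l,kt-ls)$, combined with monotonicity $\lambda(t)\le \lambda(s)$ for $s\le t$, makes the exponential weights cancel exactly:
$$
e^{\lambda(t)\langle k,kt\rangle^\gamma}\le e^{\lambda(s)\langle l,ls\rangle^\gamma}\, e^{\lambda(s)\langle k-l,kt-ls\rangle^\gamma}.
$$
Third, the paraproduct inequality \eqref{para} collapses the polynomial weights $\langle k,kt\rangle^\sigma/(\langle l,ls\rangle^\sigma\langle k-l,kt-ls\rangle^\sigma)$ into $\min(\langle l,ls\rangle,\langle k-l,kt-ls\rangle)^{-\sigma}$. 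Bounding the algebraic factor by $|k(t-s)/l|\le\langle t\rangle\langle k-l\rangle$ (via $|k|\le|l|+|k-l|$ and $|l|\ge 1$), and factoring $F[\rho](s,\lambda(s))\le \langle s\rangle^{-\sigma+1}\sup_{s'\le t}F[\rho](s',\lambda(s'))\langle s'\rangle^{\sigma-1}$, reduces matters to estimating an explicit kernel.

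The main obstacle is to show that this kernel integrates in $s$ and sums in $l$ to yield the desired $\langle t\rangle^{-\sigma+1}$. The natural splitting is into the "reaction" regime $\langle l,ls\rangle\le\langle k-l,kt-ls\rangle$, where the minimum equals $\langle l,ls\rangle$ and echoes may develop, and the complementary "transport" regime, where the minimum is $\langle k-l,kt-ls\rangle$. In the transport regime, for $s\le t/2$ one has $|kt-ls|\ge |kt|/2\gtrsim\langle t\rangle$, delivering the temporal decay directly; for $s\ge t/2$ one relies on the $\langle s\rangle^{-\sigma+1}$ factor already extracted. In the reaction regime, summability in $l$ requires $\sigma>3$, while integrability of the kernel in $s$ is precisely where the Gevrey index $\gamma>1/3$ (through \eqref{Gevrey-index} and the choice \eqref{def-lambda}) suppresses the echo cascade by producing enough cancellation in the weights. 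Combining the two regimes yields the claimed bound $C\sqrt\epsilon\,\langle t\rangle^{-\sigma+1}\sup_{0\le s\le t}F[\rho](s,\lambda(s))\langle s\rangle^{\sigma-1}$.
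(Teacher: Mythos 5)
Your treatment of the initial-data term and the general reduction (pointwise bounds on $\Frho_l$ and $\Fg_{k-l,kt-ls}$ via the sup-to-$L^2$ argument, extraction of $\sqrt\epsilon$ from \eqref{assG}, and of $\sup_{s\le t}F[\rho](s,\lambda(s))\langle s\rangle^{\sigma-1}$) matches the paper. But there is a genuine gap at the heart of the nonlinear estimate: in your second ingredient you bound
$$
e^{\lambda(t)\langle k,kt\rangle^\gamma}\le e^{\lambda(s)\langle l,ls\rangle^\gamma}\,e^{\lambda(s)\langle k-l,kt-ls\rangle^\gamma},
$$
i.e.\ you discard the factor $e^{(\lambda(t)-\lambda(s))\langle k,kt\rangle^\gamma}$ entirely. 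That factor is the \emph{only} place where the time-decay of the radius \eqref{def-lambda} and the Gevrey restriction \eqref{Gevrey-index} can act; the paper keeps it in the kernel $C_{k,l}(t,s)$ and, in the resonant regime $k<l<2k$, $|kt-ls|\le t/2$, converts it into the suppression $e^{-\theta_\delta\langle t\rangle^{\gamma-\delta}|l-k|/|k|^{1-\gamma}}$, which is precisely what absorbs the echo growth and is where $3\gamma>1+2\delta$ is used. Once you have cancelled the exponentials "exactly," no weight remains to produce the "cancellation" you invoke in your last paragraph, so that sentence asserts the key point of the lemma without proof.

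Quantitatively, your reduced kernel cannot satisfy the analogue of \eqref{claimCR}: take $k=1$, $l=2$, $s$ near the resonance $s_0=t/2$. There $\langle k-l,kt-ls\rangle=\langle 1,t-2s\rangle=O(1)$, so the paraproduct minimum gives no decay; the algebraic factor $|k|(t-s)/|l|\sim t$; and $\langle s\rangle^{-\sigma+1}\sim\langle t\rangle^{-\sigma+1}$. Integrating over the resonance window of width $O(1)$ in $s$ yields a contribution of size $t\,\langle t\rangle^{-\sigma+1}$, a full factor of $t$ larger than the claimed $\langle t\rangle^{-\sigma+1}$ (and each echo pair $l=k+1$ produces a similar loss). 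This is exactly the plasma-echo obstruction the lemma must overcome, and it is only overcome by retaining $e^{(\lambda(t)-\lambda(s))\langle k,kt\rangle^\gamma}$ together with the lower bound $\lambda(s)-\lambda(t)\gtrsim |l-k|/(|k|\langle t\rangle^\delta)$ valid when $s\approx kt/l$, as in the paper's Case 2. Your transport-regime discussion is fine, but as written the proposal does not prove the lemma; you would need to redo the reaction regime keeping the radius-shrinking gain (or supply some substitute mechanism, which the crude bounds $|k(t-s)/l|\le\langle t\rangle\langle k-l\rangle$ and the min-collapse do not provide).
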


Using Lemma \ref{lem-bdS},
 let us first end the proof of Proposition \ref{prop-rho}. Indeed, setting 
$$
 \zeta(t): = \sup_{0\le s\le t} F[\rho](s,\lambda(s)) \langle s\rangle^{\sigma-1}. 
 $$ 
we have 
$$\begin{aligned}
F[S](t,\lambda(t)) 
&\le e^{-\lambda_1 \langle t \rangle^\gamma / 2} G[f^0]^{1/2}(\lambda_1) 
+  C \sqrt \epsilon \zeta(t) \langle t\rangle^{-\sigma + 1}.
\end{aligned}
$$
Inserting this into \eqref{bd-propagator-re}, we get 
$$
\begin{aligned}
F[\rho](t,\lambda(t)) &\le e^{-\lambda_1 \langle t \rangle^\gamma / 2} G[f^0]^{1/2}(\lambda_1) 
+  C \sqrt \epsilon  \zeta(t) \langle t\rangle^{-\sigma + 1}
\\&\quad + C\int_0^t e^{-\frac14\theta_1 (t-s)}\Big[ e^{-\lambda_1 \langle s \rangle^\gamma / 2} G[f^0]^{1/2}(\lambda_1) 
+  C \sqrt\epsilon \zeta(s) \langle s\rangle^{-\sigma + 1}\Big]\; ds 
\\
&\le C_1 e^{-\lambda_1 \langle t \rangle^\gamma / 2} G[f^0]^{1/2}(\lambda_1) 
+ C_1\sqrt \epsilon \zeta(t) \langle t\rangle^{-\sigma + 1} ,
\end{aligned}$$
for some constant $C_1$. 
By definition of $\zeta(t)$, we thus have 
$$
\begin{aligned}
\zeta(t) &\le C_1 G[f^0]^{1/2}(\lambda_1) +
C_1 \sqrt \epsilon \zeta(t) .
\end{aligned} $$
Therefore, is $\epsilon$ is small enough,
$$
\zeta(t) \le 2 C_1 G[f^0]^{1/2} (\lambda_1),
$$
which is smaller than $\epsilon$, provided $G[f^0] (\lambda_1)$ is small enough.
This ends the proof of \eqref{decay-Frho}. 
\end{proof}

\begin{proof}[Proof of Lemma \ref{lem-bdS}]
From \eqref{def-FS}, for $k\not =0$, we compute 
$$ 
\begin{aligned}
&e^{ \lambda(t) \langle k, kt \rangle^\gamma } |\FS_k(t)| 
\langle k,kt\rangle^{\sigma} {|k|^{-\alpha}}
\\
& \le  e^{ \lambda(t) \langle k, kt \rangle^\gamma } |\Ff^0_{k,kt}  |
\langle k, kt\rangle^{\sigma}  {|k|^{-\alpha}}
\\&\quad + \sum_{l\in \ZZ\setminus \{0\}}  \int_0^t  e^{ \lambda(t) \langle k, kt \rangle^\gamma } 
\langle k, kt\rangle^{\sigma} {|k|^{1-\alpha}}(t-s)|l|^{-1} \Frho_l(s) \Fg_{k-l,kt-ls}(s)\; ds 
\\ & =: \mathcal{I}(t,k) + \mathcal{R}(t,k). 
\end{aligned}$$
Therefore, by definition, 
\begin{equation}
\label{est_SSS}
F[S](t,\lambda(t))  \le \sup_k \mathcal{I}(t,k) + \sup_k \mathcal{R}(t,k) .\end{equation}
Using Lemma \ref{lem-gE} for $z = \lambda_1$, the initial data term is bounded by 
$$
\begin{aligned} \mathcal{I}(t,k) 
& \le e^{- (\lambda_1 - \lambda(t)) \langle t \rangle^\gamma} G[f^0]^{1/2}(\lambda_1) 
\le  e^{- \lambda_1  \langle t \rangle^\gamma /2} G[f^0]^{1/2}(\lambda_1), 
\end{aligned}$$
for all $t\ge 0$, since $\lambda(t) \le 2 \lambda_0  < \lambda_1/2$. 

We next bound the reaction term $\mathcal{R}(t,k)$. Using the triangle inequality 
$$
\langle k,kt\rangle \le \langle l,ls\rangle + \langle k-l,kt-ls\rangle,
$$ 
we have  
$$  
e^{ \lambda(t) \langle k, kt \rangle^\gamma }
\le e^{[\lambda(t) - \lambda(s) ]\langle k, kt\rangle^\gamma} e^{\lambda(s)\langle l,ls\rangle^\gamma} e^{\lambda(s) \langle k-l, kt-ls\rangle^\gamma }  .
$$
Using the pointwise estimate \eqref{sup-g} with $z = \lambda(s)$ 
and distributing the corresponding weight for $\Frho_l(s)$ and $\Fg_{k-l,\eta}(s)$, we bound 
$$ \begin{aligned}
\mathcal{R}(t,k) 
& \le
\sum_{l\not =0} \int_0^t {|k|^{1-\alpha}}(t-s)|l|^{-1}  e^{(\lambda(t)-\lambda(s)) \langle k,kt\rangle^\gamma } \langle k, kt\rangle^{\sigma} 
\\& \qquad \times  e^{\lambda(s)\langle l,ls\rangle^\gamma } |\Frho_l(s)| e^{\lambda(s) \langle k-l, kt-ls\rangle^\gamma } |\Fg_{k-l,kt-ls}(s) | \; ds 
\\
& \le
\sum_{l\not =0} \int_0^t C_{k,l}(t,s)F[\rho](s,\lambda(s)) 
\\
&\quad \times \Big(\int_{\RR} e^{2\lambda(s) \langle k-l, \eta\rangle^\gamma } | \Fg_{k-l,\eta}(s) \partial_\eta \Fg_{k-l,\eta}(s) | \langle k-l, \eta \rangle^{2\sigma} \; d\eta\Big)^{1/2} ds
\\
& \le
C_0 \sqrt\epsilon \sum_{l\not =0} \int_0^t C_{k,l}(t,s) \langle s\rangle^{-\sigma + 1} \; ds \sup_{0\le s\le t} F[\rho](s,\lambda(s)) \langle s\rangle^{\sigma - 1},
\end{aligned}$$
upon using the assumption \eqref{assG}, where 
$$ 
C_{k,l}(t,s): = {|k|^{1-\alpha}}(t-s) {|l|^{\alpha-1}}  e^{(\lambda(t)-\lambda(s)) \langle k,kt\rangle^\gamma} 
\langle k, kt\rangle^{\sigma} \langle k-l, kt - ls \rangle^{-\sigma} \langle l, ls \rangle^{-\sigma} $$
{ for $\alpha <\frac12$ as in \eqref{def-Nrho}.}
Lemma \ref{lem-bdS} thus follows from the following claim 
\begin{equation}\label{claimCR}
\sup_{k\not =0}\sum_{l\not =0} \int_0^t C_{k,l}(t,s) \langle s\rangle^{-\sigma + 1} \;ds \le C_0 \langle t\rangle^{-\sigma + 1},
\end{equation}
uniformly in $t\ge 0$, which we shall now prove. 
Note that $ \langle l,ls\rangle + \langle k-l,kt-ls\rangle \ge \langle k,kt\rangle$. 
Hence, either $\langle l, ls\rangle$ or $ \langle k-l, kt-ls\rangle$ is greater than $\langle k, kt\rangle/2$. 
Let us consider each case separately.


\subsubsection*{Case 1: $\langle k-l,  kt - ls \rangle \ge \frac12 \langle k, kt\rangle$.} 


In this case, we have 
$$
C_{k,l}(t,s) \le {|k|^{1-\alpha}}(t-s) {|l|^{\alpha-1}}  e^{(\lambda(t)-\lambda(s)) \langle k,kt\rangle^\gamma} 
\langle l, ls \rangle^{-\sigma} .
$$
Recall that $k,l\not =0$. We consider two cases: $s\le t/2$ and $s\ge t/2$.
In the former case, we have
$$
 \lambda(s) - \lambda(t) = \frac{\lambda_0}{\langle s\rangle^\delta} - \frac{\lambda_0}{\langle t\rangle^\delta} 
 \ge  \lambda_0(2^\delta -1) \langle t\rangle^{-\delta} \ge \theta_\delta \langle t\rangle^{-\delta}
 $$ 
	for some positive constant $\theta_\delta$. Therefore, we have  
\begin{equation}\label{smalls}
\begin{aligned} 
C_{k,l}(t,s) &\le {|k|^{1-\alpha}}(t-s){|l|^{\alpha-1}} e^{ - \theta_\delta \langle k\rangle^{\gamma-\delta}\langle t\rangle^{\gamma-\delta}}  
\langle l, ls \rangle^{-\sigma} 
\le e^{ - \theta'_\delta \langle t\rangle^{\gamma-\delta}}  
\langle l\rangle^{-\sigma}
\end{aligned} \end{equation}
which clearly implies \eqref{claimCR}. On the other hand, for $s\ge t/2$, we use $|\lambda(s) - \lambda(t)|\ge \theta_0|t-s|/t^{1 + \delta}$ to bound 
$$
\begin{aligned}
C_{k,l}(t,s) 
&\le {|k|^{1-\alpha}}(t-s){|l|^{\alpha-1}}  e^{-\theta_0 |k^\gamma(t-s)| / t^{1-\gamma+\delta}} 
\langle l, ls \rangle^{-\sigma} 
\\&\le \frac{C_0 t^{\frac{1-\gamma+\delta}{\gamma}{(1-\alpha)}} |l|^{-1+\alpha}  \langle l, l t \rangle^{-\sigma}}{|t-s|^{{\frac{1-\alpha}{\gamma}}-1}},
\end{aligned}$$  
which again gives \eqref{claimCR}, { noting that $\frac{1-\alpha}\gamma -1 <1$ for $\gamma \in (\frac13,1]$,$\alpha \in (\frac13,\frac12)$ and $\sigma>3$.} 


\subsubsection*{Case 2: $\langle l, ls \rangle \ge \frac12 \langle k, kt\rangle$.}  In this case, we have 


$$ 
C_{k,l}(t,s)\le  {|k|^{1-\alpha}} (t-s) {|l|^{\alpha-1}}  e^{(\lambda(t)-\lambda(s)) \langle k,kt\rangle^\gamma } \langle k-l,  kt - ls \rangle^{-\sigma} .
$$
As in the previous case, we consider two cases: 
$|kt - ls|\le t/4$ and $|kt - ls| \ge t/4$. For the latter, we argue as in the regime $s\ge t/2$ from the previous case and easily obtain \eqref{claimCR}. 

We thus focus on the regime $|kt - ls|\le t/4$. 
Note that when $k=l$, the claim \eqref{claimCR} clearly holds thanks to the $\langle t-s\rangle^{-\sigma}$ decay, while for $s\le t/2$, the claim follows from having the exponential decay $ e^{ - \theta_\delta \langle k\rangle^{\gamma-\delta}\langle t\rangle^{\gamma-\delta}}  $, similar to \eqref{smalls}. For $k\not =l$ and $s\ge t/2$, we note 
$$ |k(t-s)| = |kt - ls + (l-k) s| \ge |l-k| s - |kt - ls| \ge \frac14 |l-k| t. $$
Thus, using $|\lambda(s) - \lambda(t)|\ge \theta_0|t-s|/t^{1 + \delta}$, we have 
\begin{equation}\label{radius-slow} e^{(\lambda(t) - \lambda(s)) \langle k,kt\rangle^\gamma } 
\le e^{-\theta_0 \langle t\rangle^{-1-\delta}|t-s| \langle k,kt\rangle^\gamma }  \le e^{ - \theta_\delta \langle t\rangle^{\gamma-\delta} \frac{|l-k|}{|k|^{1-\gamma}}} . \end{equation}
Clearly, for $|l-k|\ge |k|/2$, the above yields an exponential decay in time, and \eqref{claimCR} follows. It remains to consider the case when $|l-k|\le |k|/2$, which in particular gives $|l|\ge |k|/2$. We have 
$$ 
\begin{aligned}
C_{k,l}(t,s) 
&\le  (t-s)  e^{(\lambda(t)-\lambda(s)) \langle k,kt\rangle^\gamma } \langle k-l,  kt - ls \rangle^{-\sigma} 
\\
&\le \frac{|l-k| t}{l^2} e^{ - \theta_\delta \langle t\rangle^{\gamma-\delta} \frac{|l-k|}{|k|^{1-\gamma}}}  \langle k-l,  kt - ls \rangle^{-\sigma} |l|
\\
&\lesssim \frac{|l-k|^{1 - \frac{1}{\gamma-\delta}}}{l^{2} |k|^{\frac{\gamma-1}{\gamma-\delta}}} \langle k-l,  kt - ls \rangle^{-\sigma} |l|  
\\
&\lesssim |k|^{-2 + \frac{1-\gamma}{\gamma-\delta}}\langle k-l, kt - ls \rangle^{-\sigma} |l| 
\\
& \lesssim \langle k-l,  kt - ls \rangle^{-\sigma} |l|
\end{aligned}
$$
where the last estimate has used $3\gamma > 1 + 2\delta$, giving the limitation of Gevrey-$\gamma$ data, with $\gamma>1/3$. Thus, we bound 
$$
\begin{aligned} 
\int_{t/2}^t C_{k,l}(t,s) \langle s\rangle^{-\sigma + 1}\;ds 
&\le C_0 \langle t\rangle^{-\sigma + 1}   \int_{t/2}^t \langle k-l,  kt - ls \rangle^{-\sigma} |l|\; ds  
\\
&\le C_0 \langle k-l\rangle^{-2} \langle t\rangle^{-\sigma + 1} .
\end{aligned}
$$
The Proposition follows. 
\end{proof}



\begin{thebibliography}{99}



\bibitem{Bedrossian} J. Bedrossian, Nonlinear echoes and Landau damping with insufficient regularity, 
{\em Tunis. J. Math.} 3 (2021), no. 1, 121-205. 

\bibitem{BMM} J. Bedrossian, N. Masmoudi, and C. Mouhot,
Landau damping: paraproducts and Gevrey regularity. 
{\em Ann. PDE} 2 (2016), no.1, Art.4, 71pp. 



\bibitem{Degond}
P. Degond. 
\newblock Spectral theory of the linearized Vlasov-Poisson equation.
\newblock {\em Trans. Amer. Math. Soc.} 294 (1986), no. 2, 435-453. 


\bibitem{Glassey}
R. Glassey and J. Schaeffer, 
\newblock Time decay for solutions to the linearized Vlasov equation. 
\newblock {\em Transport Theory Statist. Phys.} 23 (1994), no. 4, 411-453. 


\bibitem{GrN6}
E.~Grenier and T.~T. Nguyen.
\newblock ${L}^\infty$ instability of {P}randtl's layers.
\newblock {\em Annals of PDE}, 2019. 


\bibitem{GrN8}
E.~Grenier and T.~T. Nguyen.
\newblock Generators functions and their applications.
\newblock {\em Proc. Amer. Math. Soc. Ser. B} 8 (2021), 245-251.


\bibitem{GNR2} 
E. Grenier, T. T. Nguyen, and I. Rodnianski.
\newblock Plasmas echoes near Penrose stable data. 
{\em SIAM J. Math. Anal.} 54 (2022), no. 1, 940-953.


\bibitem{HKNR2} 
D. Han-Kwan, T. T. Nguyen, and F. Rousset. 
\newblock Asymptotic stability of equilibria for screened Vlasov-Poisson systems via pointwise dispersive estimates
\newblock {\em Ann. PDE} 7 (2021), no. 2, Paper No. 18, 37 pp.

\bibitem{MV} C. Mouhot and C. Villani, On Landau damping.
{\em Acta Math.} 207 (2011), no. 1, 29-201. 

\bibitem{Landau} L. Landau, On the vibrations of the electronic plasma. (Russian)
{\em Akad. Nauk SSSR. Zhurnal Eksper. Teoret. Fiz.} 16, (1946). 574-586. 

\bibitem{Lin} Z. Lin and C. Zeng, Small BGK waves and nonlinear Landau damping. 
{\em Comm. Math. Phys.} 306 (2011), no. 2, 291-331. 

\end{thebibliography}
\end{document}